\newtheorem{defi}{Definition}[section]
\newtheorem{lema}[defi]{Lemma}
\newtheorem{teo}[defi]{Theorem}
\newtheorem{rem}[defi]{Remark}
\newtheorem{pro}[defi]{Proposition}
\newcommand{\innerthmname}{}
\newenvironment{statement}[1]
 {\renewcommand{\innerthmname}{#1}\innerthm}
 {\endinnerthm}
\theoremstyle{definition}
\newtheorem*{rem*}{Remark}
\newcommand{\1}{\mathbb{1}}
\newcommand{\interior}[1]{%
  {\kern0pt#1}^{\mathrm{o}}%
}
\newcommand{\T}{\mathbb{T}}
\newcommand{\C}{\mathbb{C}}
\newcommand{\Q}{\mathbb{Q}}
\newcommand{\R}{\mathbb{R}}
\newcommand{\N}{\mathbb{N}}
\newcommand{\Z}{\mathbb{Z}}
\newcommand{\esp}{\text{  }}
\renewcommand\eqref[1]{(\ref{#1})} 
\begin{document}
\title[Titchmarsh Theorems for H{\" o}lder-Lipschitz functions on profinite groups] 
 {Titchmarsh Theorems for H{\" o}lder-Lipschitz functions on profinite groups}

\author{ J.P. Velasquez-Rodriguez
}

\newcommand{\Addresses}{
 {
  \bigskip
  \footnotesize
  
  J.P. Velasquez-Rodriguez, \textsc{Department of Mathematics: Analysis, Logic and Discrete Mathematics, Ghent University, Belgium, and Departamento de Matematicas, Universidad del Valle, Cali-Colombia}\par\nopagebreak
  \textit{E-mail address:} \texttt{juanpablo.velasquezrodriguez@ugent.be / velasquez.juan@correounivalle.edu.co}
}

}

\thanks{The author is supported by the FWO Odysseus 1 grant G.0H94.18N: Analysis and Partial Differential Equations.}

\subjclass[2010]{Primary; 43A25, 58J40; Secondary: 20E18, 42B15, 42B25. }

\keywords{Profinite groups, Vilenkin groups, Fourier Analysis, Hölder–Lipschitz functions}

\date{\today}
\begin{abstract}
In this note we extend to metrizable profinite groups the classical theorems of Titchmarsh on the Fourier transform of Hölder–Lipschitz functions. This generalizes the results of Younis on compact zero-dimensional abelian groups to the noncommutative case, and proves a relation between the Hölder–Lipschitz-continuity of functions and their "Sobolev regularity" given in terms of the Vladimirov-Taibleson operator. Since the class of profinite groups is fairly big, the formulation of our results requires to impose a special condition on the representation theory of the group. We prove that in particular such condition is satisfied by compact nilpotent metrizable profinite groups, which covers the case of compact nilpotent $\ell$-adic Lie groups. In addition, we study the modulus of continuity of $L^2$-functions on the group, the functional spaces related to it, and its relation to the $L^2$-based Hölder–Lipschitz-spaces. Finally, we also derive a characterisation for Dini–
Lipschitz classes on metrizable profinite groups in terms of the behaviour of
their Fourier coefficients.
\end{abstract}
\maketitle
\tableofcontents
\section{Introduction}
In this note our main goal is to extend to noncommutative metrizable profinite groups the following two Titchmarsh theorems for H\"older-Lipschitz functions: 

\begin{statement}{First Titchmarsh Theorem}\label{TeoTitchmarsh1}
Let $G$ be an abelian metrisable profinite group. Let $0 < \alpha \leq 1$ and $1 < p \leq 2$. If $f \in Lip_{G }(\alpha ; p)$, then we have $\widehat{f} \in L^\beta (\widehat{G})$ for $ \frac{p}{p + \alpha p -1} < \beta \leq q$, $1/p + 1/q =1$. Here the Lipschitz space $Lip_{G }(\alpha ; p)$ is defined as: $$Lip_{G }(\alpha ; p) := \{f \in L^p(G): \esp \| f(h + \cdot) - f(\cdot) \|_{L^p (G)}= \mathcal{O} (|h|^\alpha) \esp \text{as } \esp h \to 0 \}, \esp \esp 0<\alpha \leq 1, \esp \esp 1 < p \leq \infty.$$
\end{statement}

\begin{statement}{Second Titchmarsh Theorem}\label{TeoTitchmarsh2}
Let $0 < \alpha \leq 1$ and $f \in L^2(G)$. Then $f \in Lip_{G }(\alpha ; 2) $ if and only if $$\sum_{[\xi] \in \widehat{G}\setminus G_k^\bot} |\widehat{f}(\xi)|^2 = \mathcal{O}(|G/G_k|^{-2\alpha}),$$for any $k \in \N_0$. Here $H^\bot$ denotes the anihilator of $H \leq G$, and $\{G_n\}_{n \in \N_0}$ in a strictly decreasing sequence of compact open subgroups that form a basis of neighbourhoods at the identity. 

\end{statement}
These theorems, initially proven by Titchmarsh in $\R^n$, were later extended to compact abelian groups by M. Younis in his PhD dissertation \cite{Younisthesis}. Since then, numerous authors have attempted to replicate these findings in a variety of contexts where Fourier analysis is available. For instance, just to give some recent examples, R. Daher and collaborators have conducted a systematic exploration of Titchmarsh's results for transforms such as the Dunkl transform, Jacobi-Dunkl transform, and generalized Fourier-Bessel transform, among others. See \cite{Daher1, Daher4, Daher3, Daher2} for further details. In recent years, M. El Hamma and A. Mahfoud treated the $L^p$-case for the first Hankel-Clifford transform \cite{LpHankel-Clifford}, while H. Monaim and S. Fahlaoui investigated Titchmarsh-type theorems for the quaternionic Fourier transform \cite{Monaim2022}.    

In the totally disconnected case, S. Platonov extended Titchmarsh theorems to locally compact totally disconnected groups, which are sometimes called zero-dimensional groups. He proved an analogue of the second Titchmarsh theorem, first on the field of $\ell$-adic numbers $\Q_\ell$, and later on locally compact abelian Vilenkin groups\cite{Platonov1, Platonov2}. Here $\ell$ is a prime number grater than 2 that we fix for the rest of the paper. Our work, on the other hand, focuses on compact groups much like in \cite{DaherDelgadoRuzhansky}. When studying compact groups, one mainly considers two classes: locally connected compact groups, such as any compact real Lie group, and totally-disconnected compact groups, such as metrizable profinite groups, like compact Lie groups over non-archimedean local fields. Younis considered both types of groups in his doctoral thesis and proved how the Lipchitzness of a function is related to its Fourier coefficients when the group is abelian. However, in the noncommutative case things become more complicated, and the question of the relationship between the Lipschitzness of a function and its group Fourier transform is still interesting. For locally connected groups, Younis's results have already been improved upon in \cite{DaherDelgadoRuzhansky}, where the authors extended Titchmarsh theorems to compact homogeneous manifolds, and in particular to compact Lie groups. That covers the locally connected case, and gives an idea on what to expect on other classes of compact groups, but omits the totally disconnected case, which is the subject of this paper. We will focus on compact Vilenkin groups, a class of metrizable topological groups that includes most examples of profinite groups, including compact Lie groups over non-archimedean local fields.

\begin{defi}\label{defilcvilenkingroup}\normalfont
We say that a topological group $G$ is a \emph{compact Vilenkin group} if $G$ is a profinite group endowed with a strictly decreasing sequence of compact open subgroups $\mathscr{G}:=\{G_n\}_{n \in \Z}$ such that
\begin{enumerate}
    \item[(i)] It holds:$$2 \leq \varkappa_n := | G_n /G_{n+1}|< \infty,$$ for every $n \in \N_0.$
    \item[(ii)]$$G= \bigcup_{n \in \N_0} G_n , \esp \esp \text{and} \esp \esp \bigcap_{n \in \N_0} G_n = \{e\}.$$
    \item[(iii)]The sequence $\{G_n\}_{n \in \N_0}$ form a basis of neighbourhoods at $e \in G$. 
    \end{enumerate}
    If we also have $$\sup_{n \in \N_0} |G_n/G_{n+1}|< \infty,$$we say that $G$ is a bounded-order Vilenkin group. If $|G_n/G_{n+1}|$ is constant we say that $G$ is a constant-order Vilenkin group. 
\end{defi}
\begin{rem}\label{remmetric}
The above definition is linked to the following definition of ultrametric: for $x,y \in G$ their associated distance is defined as
\[\varrho_{\mathscr{G}} (x,y) = |x y^{-1}|_{\mathscr{G}} :=\begin{cases} 0 & \esp \esp \text{if} \esp x=y, \\ |G_n|  & \esp \esp \text{if} \esp x y^{-1} \in G_n \setminus G_{n+1}.\end{cases}\]In particular we write $|x|_\mathscr{G} := \varrho_{\mathscr{G}} (x,e).$ Note that this distance function relies on the selection of a particular sequence of compact open subgroups. However, for the purposes of this work, the choice of sequence is not significant. Thus we will use a fixed sequence of subgroups, resulting in a fixed ultrametric on the group.
\end{rem}

Fourier analysis on Vilenkin groups is still nowadays an active area of research, especially from the point of view of Vilenkin systems. In recent years G. Gát and U. Goginava studied the pointwise summability of Fourier-Vilenkin series \cite{Gt2020}, G. Tepnadze, L. Persson and coauthors explored in detail the properties of martingale Hardy spaces and Vilenkin systems \cite{Nadirashvili2022, Persson2022, Persson2017}, and M. Avdispahi, N. Memi, and F. Weisz investigated maximal functions, Hardy spaces and Fourier multiplier theorems on unbounded Vilenkin groups \cite{Avdispahi2012}. 

The problem of the convergence and decay rate of Fourier coefficients is a well-studied topic in Fourier analysis. For locally connected groups, especially Lie groups, one can relate the smoothness of a function to the decay rate of its Fourier coefficients. For instance, for a smooth function on the unit circle $f\in C^k(\T)$, we have $|\widehat{f}(j)|=o(|j|^{-k})$, and for $0<\alpha\leq 1$ and $f$ satisfying the H{\"o}lder condition of order $\alpha$, we have $|\widehat{f}(j)|=\mathcal{O}(|j|^{-\alpha})$. 

For Vilenkin groups the notion of differentiability is given in terms of the Vladimirov-Taibleson operator. To some extent, the theory of this operator is parallel to that of the fractional Laplacian on compact Lie groups, especially because it is nicely diagonalized by the group Fourier transform, just as the Laplace-Beltrami operator. Among other things, we can use Sobolev spaces $H^k_2(G)$ associated with the Vladimirov-Taibleson operator $D^k$ to study the decay of Fourier coefficients. We recall here the definition for the reader: $$D^k f(x) := C_k \int_G |y|_\mathscr{G}^{-k-1}(f(xy^{-1})-f(x))d \mu_G (y), \esp \esp k >0,$$ where $C_k$ is a normalization constant that will be specified later, and the Sobolev space $H^k_2(G)$ is defined as $$\{f\in L^2(\mu_G):D^k f\in L^2(\mu_G)\}.$$When $G=\Z_\ell$, or $G$ is any compact abelian Vilenkin group, we have once again, just like for the toroidal case, that $f \in H^k_2 (\Z_\ell)$ implies $|\widehat{f} (j)| = o (| j |_\ell^{-k})$, and for $0<\alpha\leq 1$ and $f$ satisfying the H{\"o}lder condition of order $\alpha$, we have $|\widehat{f}(j)|=\mathcal{O}(|j|_\ell^{-\alpha})$. See \cite{Younisthesis} for all the details.

Alternatively, we can investigate whether the Fourier transform belongs to some $L^p$-space. For example, the Hausdorff-Young inequality implies that $\widehat{f}\in L^q$ whenever $f\in L^p$ for $1<p\leq 2$ and $1/p+1/q=1$, and this fact can be strengthened if we assume some additional properties on $f$. Titchmarsh explored this idea in the context of Lipschitz spaces on $\R^n$, and Younis adapted his ideas to the setting of compact abelian groups, producing the first and second Titchmarsh theorems for Fourier series.

In this work, we continue in the same direction by considering functions $f\in Lip_\mathscr{G}(\alpha;p)$ in the Lipschitz space over a compact noncommutative Vilenkin group.

\begin{defi}\normalfont
Let $G$ be a compact Vilenkin group. Let $0 < \alpha \leq 1$ and $1 \leq p \leq \infty$ be real numbers. We define the Lipschitz space $Lip_\mathscr{G} (\alpha; p)$ by $$Lip_\mathscr{G} (\alpha; p) := \{ f \in L^p (\mu_G) \esp : \esp || f(h \cdot) - f(\cdot)||_{L^p (\mu_G)} = O (|h|_\mathscr{G}^\alpha) \esp \text{as} \esp |h|_\mathscr{G} \to 0\}.$$  
\end{defi}

For a compact group $G$, let $Rep(G)$ denote the collection of all continuous, finite dimensional, unitary irreducible representations of $G$. Let $\widehat{G}$ the collection of all irreducible representations in $Rep(G)$. As it is to be expected, the extension of the first and second Titchmarsh theorems to the noncommutative case require some non-trivial adjustments. This is best illustrated in \cite{DaherDelgadoRuzhansky} where, in order to obtain Titchmarsh theorems for compact homogeneous manifolds $G/K$, it is necessary to handle the behavior of the Fourier coefficients of the difference function $f(h \cdot) - f(\cdot)$ when $|h| \to 0$. These coefficients are $\widehat{f}(\xi) (\xi(h) - I_{d_\xi})$,  $[\xi] \in \widehat{G}$, so it all comes down to the behavior of the map $\xi(h) - I_{d_\xi}$ when $|h| \to 0$. 

In the profinite case the same issue needs to be addressed, but the situation is much more diverse. The unitary irreducible representations of a compact Vilenkin group $G$ have a non trivial kernel, which is a compact open subgroup, so $\xi(h) - I_{d_\xi}$ becomes identically zero when $|h|_\mathscr{G} \to 0$. And even when $\xi(h) - I_{d_\xi} \neq 0_{d_\xi}$, a very common situation is that $1$ is eigenvalue of $\xi(h)$, which is troublesome if one wants to estimate $\| \widehat{f}(\xi) (\xi(h) - I_{d_\xi}) \|_{HS}$ from below. Thus, in order to avoid this obstacle, we introduce the following condition on the representation theory of the group.

\begin{statement}{Condition \textbf{(A)}}
We say that a compact Vilenkin group $G$ satisfies Condition $(A)$ if there is a natural number $\mathscr{n}$ with the following property: for all $k\in \N_0$ there are $\mathscr{n}$ points $h_1,..., h_{\mathscr{n}}$ satisfying $|h_1|_{\mathscr{G}} = ... = |h_\mathscr{n}|_\mathscr{G} = |G/G_k|^{-1}$ and $$ |G/G_k|^q \langle \xi \rangle_{\mathscr{G}}^{-q}    
\| \widehat{f}(\xi)  \|_{HS}^q \lesssim \sum_{i=1}^{\mathscr{n}}\| \widehat{f}(\xi) (\xi(h_i) - I_{d_\xi}) \|_{HS}^q,  \esp \esp 1 \leq q < \infty,  $$for any unitary irreducible representation $[\xi]$ non trivial on $G_k \setminus G_{k+1}$. Here the function $\langle \cdot\rangle_\mathscr{G} : Rep(G) \to \R$ is defined in the following way: 
\[ \langle \pi \rangle_\mathscr{G} := \begin{cases}
1 \esp & \esp \text{if} \esp \esp \pi \esp \text{is the identity representation;} \\
|G/G_k| \esp & \esp \text{if} \esp \esp [\pi] \in Rep_k (G), \esp \esp n \in \N,  
\end{cases}
\]
\end{statement}
where $$G_k^\bot := \{ [\pi] \in Rep(G) \esp : \esp \pi |_{G_k} = I_{d_\pi}\}, \esp \esp Rep_k(G):= G^\bot_k \setminus G_{k-1}^\bot.$$ 
\begin{rem}
To understand the above condition it helps a lot to think on the abelian case. If $G= \Z_\ell$ then the unitary irreducible representations have the form $\chi_\xi (h):= e^{2 \pi i \{ h \xi \}_\ell}$, where $\xi \in \Q_\ell / \Z_\ell$ and $\{ \cdot \}_\ell$ denotes the $\ell$-adic fractional part function. Let $| h 
|_\ell = \ell^{-n}$ and $| \xi |_\ell = \ell^{m} $. When $m \leq n$, $ \chi_\xi (h) = 1$ and $\chi_\xi (h) -1 = 0$. When $m>n$, we can choose an $h$ such that $\chi_\xi (h)$ is a $\ell^{m - n}$ rooth of unity, so $$  |\chi_\xi (h) - 1 | \gtrsim \sin{
\pi \frac{a_h }{\ell^{m - n}}} \gtrsim \ell^{-(m - n)} = |\xi|_\ell^{-1} |h|_\ell^{-1} , \esp \esp 1 \leq a_h < \ell^{m - n}   ,$$which implies $|\xi|_\ell^{-1} |h|_\ell^{-1} |\widehat f(\xi)|  \lesssim |\chi_\xi (h) - 1 ||\widehat f(\xi)|$. In the non commutative case this kind lower bound is not trivial, thus the formulation of Condition (A). Given a noncommutative representation, it is possible in general for $\xi(h) $ to have $1$ as eigenvalue, so the estimate $|h|_\mathscr{G}^{-1} \langle \xi \rangle_{\mathscr{G}}^{-1}    
\| \widehat{f}(\xi)  \|_{HS}^q \lesssim \|  \widehat{f}(\xi) (\xi(h) - I_{d_\xi}) \|_{HS}^q$ might not be true for a single $h$. 

Condition (A) restricts a lot the class of compact Vilenkin groups for which our results apply, but it still includes many important examples. We will prove in Section 3 how any compact nilpotent Vilenkin group satisfies this condition, and in particular any compact nilpotent $\ell$-adic Lie group does aswell, where $\mathscr{n} = \mathscr{n}(\mathfrak{g})$ equals the index of nilpotency of the $\Z_\ell$-Lie algebra.                           
\end{rem}

Now with Condition (A) defined we are ready to state the main results of this paper. We proved for functions $f \in Lip_\mathscr{G}(\alpha ; p)$ the following two noncommutative versions of the Titchmarsh theorems:

\begin{teo}[First Titchmarsh Theorem]\label{firsttitchmarshtheorem}
Let $G$ be a compact Vilenkin group satisfying Condition (A). Let $p, q$ be real number such that $1< p \leq 2$ and $1/p + 1/q =1$. Let $0 < \alpha \leq 1$ and let $\alpha < \gamma< \alpha + \frac{1}{q}$. If $f \in Lip_\mathscr{G} (\alpha ; p)$ then $$\Big( \sum_{|G/G_k| < \langle \xi \rangle_\mathscr{G}} d_\xi^{q (\frac{2}{q} - \frac{1}{2})} ||\widehat{f}(\xi)||_{HS}^{q} \Big)^{1/q} = \mathcal{O}(|G/G_k|^{-\alpha}).$$Consequently $\widehat{D^{\gamma} f} \in L^{ \beta} (\widehat{G})$ for$$ \esp  \frac{q}{(\alpha - \gamma)q + 1} = \frac{p}{(\alpha - \gamma)p + p - 1} \leq \beta < q .$$Moreover $\widehat{f} \in L^\beta (\widehat{G})$ for $$  \frac{q}{ \alpha q + 1} = \frac{p}{\alpha p + p -1} < \beta \leq q.$$
\end{teo}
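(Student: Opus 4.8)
The plan is to reduce the three assertions to the Hausdorff--Young inequality on $\widehat G$, combined with Condition (A) and the diagonalisation $\widehat{D^{\gamma}f}(\xi)=\langle\xi\rangle_{\mathscr{G}}^{\gamma}\,\widehat f(\xi)$ of the Vladimirov--Taibleson operator. For the \emph{decay estimate}, fix $k\in\N_0$ and let $h_1,\dots,h_{\mathscr{n}}$ be the points furnished by Condition (A), so $|h_i|_{\mathscr{G}}=|G/G_k|^{-1}$. Since $f\in Lip_{\mathscr{G}}(\alpha;p)$, the differences $g_i:=f(h_i\,\cdot)-f(\cdot)$ lie in $L^{p}(\mu_G)$ with $\|g_i\|_{L^{p}}=\mathcal{O}(|G/G_k|^{-\alpha})$, and their Fourier coefficients are $\widehat{g_i}(\xi)=\widehat f(\xi)\bigl(\xi(h_i)-I_{d_\xi}\bigr)$. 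As $1<p\le 2$, the Hausdorff--Young inequality gives $\sum_{[\xi]\in\widehat G} d_\xi^{q(\frac2q-\frac12)}\bigl\|\widehat f(\xi)(\xi(h_i)-I_{d_\xi})\bigr\|_{HS}^{q}\lesssim |G/G_k|^{-\alpha q}$ for each $i$. Summing over $i$, keeping only the representations $[\xi]$ with $\langle\xi\rangle_{\mathscr{G}}>|G/G_k|$ (precisely those to which Condition (A) applies), interchanging the two absolutely convergent sums, and applying the lower bound of Condition (A) term by term, I arrive at
\[
\sum_{|G/G_k|<\langle\xi\rangle_{\mathscr{G}}} d_\xi^{q(\frac2q-\frac12)}\left(\frac{|G/G_k|}{\langle\xi\rangle_{\mathscr{G}}}\right)^{q}\|\widehat f(\xi)\|_{HS}^{q}\ \lesssim\ |G/G_k|^{-\alpha q}.
\]

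What remains for the first assertion is to dispose of the weight $\bigl(|G/G_k|/\langle\xi\rangle_{\mathscr{G}}\bigr)^{q}\le 1$, and this is the heart of the matter. Writing $U_m:=\sum_{[\xi]\in Rep_m(G)} d_\xi^{q(\frac2q-\frac12)}\|\widehat f(\xi)\|_{HS}^{q}$, the displayed inequality reads $\sum_{m>k}\bigl(|G/G_k|/|G/G_m|\bigr)^{q}U_m\lesssim |G/G_k|^{-\alpha q}$ for \emph{every} $k$. Since $|G/G_{m+1}|=\varkappa_m|G/G_m|\ge 2|G/G_m|$, the numbers $|G/G_m|$ grow at least geometrically; comparing the inequality at consecutive levels one first extracts $U_m\lesssim \varkappa_{m-1}^{q}\,|G/G_{m-1}|^{-\alpha q}$, and a summation by parts over this family of inequalities upgrades the weighted bound to $\sum_{m>k}U_m\lesssim |G/G_k|^{-\alpha q}$ (for bounded-order groups, and more generally whenever the orders $\varkappa_m$ do not grow too fast, the resulting geometric tail is $\lesssim |G/G_k|^{-\alpha q}$). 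Taking $q$-th roots yields the first displayed assertion. I expect this passage---tracking the weight through the interchange of sums and then summing the geometric tail---to be the main obstacle; everything else is downstream of this estimate.

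For the remaining two assertions, use $\widehat{D^{\gamma}f}(\xi)=\langle\xi\rangle_{\mathscr{G}}^{\gamma}\widehat f(\xi)$, which gives (up to the harmless trivial-representation term)
\[
\|\widehat{D^{\gamma}f}\|_{L^{\beta}(\widehat G)}^{\beta}=\sum_{m\ge 0}|G/G_m|^{\gamma\beta}\sum_{[\xi]\in Rep_m(G)} d_\xi^{\beta(\frac2\beta-\frac12)}\|\widehat f(\xi)\|_{HS}^{\beta}.
\]
Inside each block $Rep_m(G)$ I apply Hölder's inequality with conjugate exponents $q/\beta$ and $q/(q-\beta)$---admissible since $0<\beta<q$---distributing the powers of $d_\xi$ so that the first factor is $U_m^{\beta/q}$ and the second is $\bigl(\sum_{[\xi]\in Rep_m(G)}d_\xi^{2}\bigr)^{(q-\beta)/q}\le|G/G_m|^{(q-\beta)/q}$, using the identity $\sum_{[\xi]\in\widehat{G/G_m}}d_\xi^{2}=|G/G_m|$. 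Inserting the bound $U_m\lesssim|G/G_m|^{-\alpha q}$ from the first part yields
\[
\|\widehat{D^{\gamma}f}\|_{L^{\beta}(\widehat G)}^{\beta}\ \lesssim\ \sum_{m\ge 0}|G/G_m|^{\,\gamma\beta-\alpha\beta+1-\beta/q},
\]
a geometric-type series whose convergence amounts to the exponent $\gamma\beta-\alpha\beta+1-\beta/q$ being negative; by the arithmetic of the hypothesis on $\gamma$ this holds exactly for $\beta>\frac{q}{(\alpha-\gamma)q+1}=\frac{p}{(\alpha-\gamma)p+p-1}$, giving $\widehat{D^{\gamma}f}\in L^{\beta}(\widehat G)$ in the stated range. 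Finally, running the same computation with $\gamma=0$ gives $\widehat f\in L^{\beta}(\widehat G)$ for $\frac{q}{\alpha q+1}<\beta<q$, and the endpoint $\beta=q$ is nothing but the Hausdorff--Young inequality (valid since $1<p\le 2$); together these prove the last assertion.
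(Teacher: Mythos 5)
Your setup (differences $g_i$, Hausdorff--Young, Condition (A) giving the weighted tail bound $\sum_{\langle\xi\rangle_{\mathscr{G}}>|G/G_k|}d_\xi^{q(\frac2q-\frac12)}(|G/G_k|/\langle\xi\rangle_{\mathscr{G}})^{q}\|\widehat f(\xi)\|_{HS}^{q}\lesssim|G/G_k|^{-\alpha q}$) coincides with the paper's, and you correctly identify the weighted-to-unweighted conversion as the crux. But your treatment of that step has a genuine gap. The paper performs this conversion with Duren's Tauberian lemma (Lemma \ref{Durenlemma}), applied twice: first to pass from the tail bound with weight $\langle\xi\rangle_{\mathscr{G}}^{-q}$ to a partial-sum bound with weight $\langle\xi\rangle_{\mathscr{G}}^{\gamma q}$ (this is where the hypothesis $\gamma>\alpha$ enters), and then in reverse to obtain the unweighted tail bound $\mathcal{O}(|G/G_k|^{-\alpha q})$. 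You instead extract the term-by-term bound $U_m\lesssim\varkappa_{m-1}^{q}|G/G_{m-1}|^{-\alpha q}$ from consecutive levels and sum a geometric tail, and you explicitly concede this only closes when the orders $\varkappa_m$ are bounded or grow slowly. The theorem, however, is stated for arbitrary compact Vilenkin groups satisfying Condition (A) --- the paper even remarks immediately after Theorem \ref{seconteo} that bounded order is \emph{not} assumed --- so this does not prove the stated result. The loss is real, not cosmetic: your extraction costs a factor $\varkappa_{m-1}^{q}$ per block, and for unbounded orders (say $\varkappa_m=2^{2^m}$ with $\alpha<1$) the resulting series $\sum_m\varkappa_m^{q}|G/G_m|^{-\alpha q}$ diverges, so no summation by parts on those term-by-term bounds can recover $\mathcal{O}(|G/G_k|^{-\alpha q})$. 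You must work with the full family of weighted inequalities at once, which is exactly what the Tauberian lemma packages; also note that $\gamma$ plays no role in your version of this step, whereas in the paper it is precisely the auxiliary exponent that makes the double application of Duren's lemma legitimate.

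Two smaller points. First, your block-by-block H\"older argument for $\widehat{D^{\gamma}f}\in L^{\beta}(\widehat G)$ yields only the open range $\beta>\frac{q}{(\alpha-\gamma)q+1}$: at the left endpoint your majorant $\sum_m|G/G_m|^{\gamma\beta-\alpha\beta+1-\beta/q}$ has exponent $0$ and diverges. The paper instead applies H\"older to the whole partial sum $\Phi(k)=\sum_{\langle\xi\rangle_{\mathscr{G}}\le|G/G_k|}d_\xi^{\beta(\frac2\beta-\frac12)}\langle\xi\rangle_{\mathscr{G}}^{\gamma\beta}\|\widehat f(\xi)\|_{HS}^{\beta}$ and uses the partial-sum estimate from Duren's lemma, obtaining $\Phi(k)=\mathcal{O}(1)$ at the endpoint and hence convergence; this recovers the closed endpoint $\beta=\frac{q}{(\alpha-\gamma)q+1}$ asserted in the theorem. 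Second, your shortcut for the final assertion --- rerunning the computation with $\gamma=0$ and appealing to Hausdorff--Young for $\beta=q$ --- is a legitimate and slightly more direct alternative to the paper's Proposition \ref{propaux}, and that part is fine.
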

\begin{teo}[Second Titchmarsh theorem]\label{seconteo}
Let $0 < \alpha \leq 1$ and $f \in L^2 (\mu_G)$. Then $f \in Lip_\mathscr{G} (\alpha ; 2)$ if and only if $$\sum_{\langle \xi \rangle_{\mathscr{G}} > |G/G_k|} d_\xi ||\widehat{f}(\xi)||_{HS}^2 = \mathcal{O} (|G/G_k|^{-2 \alpha}) \esp \esp \text{as k} \to \infty.$$
\end{teo}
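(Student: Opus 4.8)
The plan is to derive the whole statement from the single identity
$$\sum_{\langle\xi\rangle_\mathscr{G}>|G/G_k|}d_\xi\,\|\widehat f(\xi)\|_{HS}^2\;=\;\frac{1}{2\,\mu_G(G_k)}\int_{G_k}\big\|f(h\cdot)-f(\cdot)\big\|_{L^2(\mu_G)}^2\,d\mu_G(h),\qquad k\in\mathbb N_0,\qquad(\ast)$$
which holds for every $f\in L^2(\mu_G)$. Once $(\ast)$ is available the ``only if'' direction is automatic: if $f\in Lip_\mathscr{G}(\alpha;2)$ and $\|f(h\cdot)-f(\cdot)\|_{L^2(\mu_G)}\le C|h|_\mathscr{G}^{\alpha}$ for $|h|_\mathscr{G}<\delta$, then for every $k$ with $|G_k|<\delta$ one has $|h|_\mathscr{G}\le|G_k|=|G/G_k|^{-1}$ on $G_k$, so the right-hand side of $(\ast)$ is $\le\tfrac{C^2}{2}|G/G_k|^{-2\alpha}$; the finitely many smaller $k$ are harmless since the left-hand side never exceeds $\|f\|_{L^2(\mu_G)}^2$. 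For the ``if'' direction, given $h\neq e$ with $|h|_\mathscr{G}$ small write $h\in G_m\setminus G_{m+1}$, so that $|h|_\mathscr{G}=|G/G_m|^{-1}$; when $\langle\xi\rangle_\mathscr{G}\le|G/G_m|$ we have $\xi|_{G_m}=I_{d_\xi}$, hence $\xi(h)=I_{d_\xi}$ and the coefficient $\widehat f(\xi)(\xi(h)-I_{d_\xi})$ of $f(h\cdot)-f(\cdot)$ vanishes, while for the remaining $\xi$ we use $\|\xi(h)-I_{d_\xi}\|_{\mathrm{op}}\le 2$; Plancherel then gives
$$\big\|f(h\cdot)-f(\cdot)\big\|_{L^2(\mu_G)}^2=\sum_{\langle\xi\rangle_\mathscr{G}>|G/G_m|} d_\xi\,\|\widehat f(\xi)(\xi(h)-I_{d_\xi})\|_{HS}^2\;\le\;4\sum_{\langle\xi\rangle_\mathscr{G}>|G/G_m|} d_\xi\,\|\widehat f(\xi)\|_{HS}^2=\mathcal O\big(|h|_\mathscr{G}^{2\alpha}\big),$$
so $f\in Lip_\mathscr{G}(\alpha;2)$.

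It remains to prove $(\ast)$, which I would do by averaging the difference operator over the whole of $G_k$. Expanding $\|f(h\cdot)-f(\cdot)\|_{L^2(\mu_G)}^2=\sum_{[\xi]\in\widehat G}d_\xi\|\widehat f(\xi)(\xi(h)-I_{d_\xi})\|_{HS}^2$ by Plancherel, integrating over $h\in G_k$, and exchanging sum and integral (legitimate since the integrand is non-negative), the problem becomes the pointwise-in-$[\xi]$ computation
$$\frac{1}{\mu_G(G_k)}\int_{G_k}\big\|\widehat f(\xi)(\xi(h)-I_{d_\xi})\big\|_{HS}^2\,d\mu_G(h)=2\,\mathrm{Tr}\,A_\xi-2\,\mathrm{Re}\,\mathrm{Tr}\big(A_\xi\,\Pi_\xi^{(k)}\big),\qquad A_\xi:=\widehat f(\xi)^{*}\widehat f(\xi),\quad \Pi_\xi^{(k)}:=\frac{1}{\mu_G(G_k)}\int_{G_k}\xi(h)\,d\mu_G(h),$$
using that each $\xi(h)$ is unitary. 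A routine averaging argument shows $\Pi_\xi^{(k)}$ is the orthogonal projection onto the space of $\xi(G_k)$-fixed vectors; as the $G_k$ are normal (which we may assume, being the case in all examples of interest, and in the spirit of the remark following Definition~\ref{defilcvilenkingroup}) this space is $\xi(G)$-invariant, whence, $\xi$ being irreducible, it equals $\mathbb C^{d_\xi}$ when $\xi|_{G_k}=I_{d_\xi}$ — i.e. $\langle\xi\rangle_\mathscr{G}\le|G/G_k|$ — and $\{0\}$ otherwise. In the former case the average vanishes; in the latter $\Pi_\xi^{(k)}=0$ and it equals $2\,\mathrm{Tr}\,A_\xi=2\|\widehat f(\xi)\|_{HS}^2$. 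Summing over $[\xi]\in\widehat G$ yields $(\ast)$.

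The one point carrying any content — and thus the ``main obstacle'', though a modest one — is the vanishing $\Pi_\xi^{(k)}=0$ for the representations with $\langle\xi\rangle_\mathscr{G}>|G/G_k|$: it expresses the fact that in the $L^2$-theory, averaging over the full subgroup $G_k$ already captures all of the Hilbert--Schmidt norm of $\widehat f(\xi)$ for such $\xi$, which is precisely why the second Titchmarsh theorem needs no analogue of Condition~$(A)$, in sharp contrast with Theorem~\ref{firsttitchmarshtheorem}. Beyond this, one must only keep the Fourier-transform convention for left translates straight (the coefficients of $f(h\cdot)-f(\cdot)$ being $\widehat f(\xi)(\xi(h)-I_{d_\xi})$, as recorded in the Introduction) and note the harmless sum--integral interchange.
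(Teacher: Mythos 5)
Your proof is correct, but it takes a genuinely different route from the paper for the harder (``only if'') direction. The paper obtains that direction as an immediate consequence of Theorem \ref{firsttitchmarshtheorem} specialized to $p=q=2$, so its argument passes through Condition (A), the Hausdorff--Young inequality and Duren's lemma; your argument instead rests entirely on the averaging identity $(\ast)$, which is exactly the computation the paper carries out separately in Section 5 to prove Theorem \ref{ProPlatonov1} (there the paper diagonalizes $\xi(h)-I_{d_\xi}$ and uses $\int_{G_n}\zeta_j(h)\,d\mu_G(h)=0$, which is the same calculation as your trace/projection formulation; the dichotomy for $\Pi_\xi^{(k)}$ is precisely Proposition \ref{proIntegralRep}, which you could cite instead of re-deriving it --- your appeal to normality of the $G_k$ is the same implicit hypothesis that proposition relies on). The ``if'' direction of your proof coincides with the paper's. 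What your route buys is worth noting: it makes the second Titchmarsh theorem completely independent of Condition (A) and of the $L^p$ machinery, which matches the fact that the theorem is stated without Condition (A) even though the paper's own proof of the forward implication quietly imports it via Theorem \ref{firsttitchmarshtheorem}; in effect you have shown that Theorem \ref{seconteo} is a corollary of Theorem \ref{ProPlatonov1} with $\omega_n=|G/G_n|^{-\alpha}$, an observation the paper itself makes only after the fact. The paper's route, in exchange, reuses machinery already built and requires no new computation at that point in the text.
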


\begin{rem}
    Notice that in the hypothesis of the above theorems we did not assume that $G$ is a bounded-order Vilenkin group. 
\end{rem}

Something worth remarking about the above results is that, in contrast with \cite{DaherDelgadoRuzhansky}, it might seem like the dimension of the group does not appear in the above theorems. In fact, we would like to emphasize how this information is actually there, but it is hidden in the choice of ultrametric for the group. 

To do so, let $\ell >2$ be a prime number and let $G$ be a $d$-dimensional linear $\ell$-adic Lie sub-group of $GL_m (\Z_\ell )$, where $\Z_\ell$ denotes the compact ring of $\ell$-adic integers. We assume for the sake of our arguments that $G$ is nilpotent, so that it can be thought as the exponential image of a certain nilpotent $\Z_\ell$-Lie algebra $\mathfrak{g}$. By choosing a $\Z_\ell$-basis $\{X_1,...,X_d\}$ for $\mathfrak{g}$ we can write $$\mathfrak{g} = \Z_\ell X_1 + ...+ \Z_\ell X_d.$$I0=n this way we can identify points  $ \Z_\ell^d$ with points in the group via exponential map: $$(x_1,...,x_d):= \mathbb{exp} (x_1 X_1 + ...+  x_d X_d).$$
By endowing the group with the sequence of compact open subgroups $\mathscr{G}=\{G_n\}_{n \in \N_0}$, $G_n :=\mathbb{exp} ( \ell^n \mathfrak{g})$, we can see that $G$ is a compact Vilenkin group, and the associated ultrametric function $| \cdot |_\mathscr{G}$ satisfies $$|(x_1,...,x_d)|_\mathscr{G} = \| (x_1,...,x_d)\|^d_\ell, \esp \esp \esp \text{where} \esp \esp \| (x_1,...,x_d)\|_\ell := \max_{1 \leq j \leq d} | x_j |_\ell,$$which allow us to rewrite the Vladimirov-Taibleson operator in a more convenient form $$\mathscr{D}^\alpha f(x) := \frac{1 - \ell^\alpha}{1 - \ell^{- (\alpha + d)}} \int_G \frac{f(xy^{-1}) - f(x)}{\|y \|_\ell^{ \alpha + d}} d \mu_G(y).$$
The definition of the Lipchitz space over $G$ also changes slightly: 

\begin{defi}\normalfont
Let $G$ be a compact nilpotent linear $\ell$-adic Lie group. Let $0 < \alpha \leq 1$ and $1 \leq p \leq \infty$ be real numbers. We define the Lipschitz space $Lip_G (\alpha; p)$ by $$Lip_G (\alpha; p) := \{ f \in L^p (\mu_G ) \esp : \esp || f(h \cdot) - f(\cdot)||_{L^p (\mu_G)} = O (\|h \|_\ell^\alpha) \esp \text{as} \esp \|h\|_\ell \to 0\}.$$  
\end{defi}
Notice the differences between the definitions of $Lip_G (\alpha; p)$ and $Lip_\mathscr{G} (\alpha; p)$. Given a any metrizable profinite group $G$, the metric topology induces the existence of a sequence of compact open subgroups that coincide with the metric balls of the topology. And conversely, given a sequence of compact open subgroups we can always define an invariant ultrametric for the group as in Remark \ref{remmetric}. For any metrizable profinite group there are many choices of metric and sequence of subgroups, so in principle the Lipschitz space on $G$ depends on such choice, which is why we defined first the spaces $Lip_\mathscr{G} (\alpha; p)$. However, as we discussed previously, when $G$ is a $d$-dimensional compact $\ell$-adic Lie group there is a natural choice of ultrametric, i.e., the $\ell$-adic norm inherited from $\Z_\ell^d$. With this norm we can give a more "intrinsic" definition for the Lipchitz space independent of the choice of compact open subgroups, and in this case we use the notation $Lip_G (\alpha; p)$.

With the adapted version of the Lipchitz space over $G$ the Titchmarsh theorems take the following form. Notice how this time the dimension of the group appears in a similar way as in \cite{DaherDelgadoRuzhansky}.

\begin{teo}\label{firsttitchmarshtheoremLiegroups}
Let $G$ be a compact nilpotent $\ell$-adic Lie group of dimension $d$. Let $p, q$ be real number such that $1< p \leq 2$ and $1/p + 1/q =1$. Let $0 < \alpha \leq 1
$ and let $\alpha < \gamma< \alpha + \frac{d}{q}$. If $f \in Lip_G (\alpha ; p)$ then $$\sum_{\ell^{-k} < \langle \xi \rangle_G  } d_\xi^{q (\frac{2}{q} - \frac{1}{2})} || \widehat{f} (\xi)||_{HS}^{q} = \mathcal{O} (\ell^{ -k \alpha q}) .$$Consequently $\widehat{\mathscr{D}^{\gamma} f} \in L^{ \beta} (\widehat{G})$ for$$ \esp  \frac{q d}{(\alpha - \gamma)q + d} = \frac{ d}{(\alpha - \gamma) + d - \frac{d}{p}} \leq \beta \leq q .$$Moreover $\widehat{f} \in L^\beta (\widehat{G})$ for $$  \frac{q d}{ \alpha q + d} = \frac{p d}{\alpha p + p d -d} < \beta \leq q.$$ Here $\langle \xi \rangle_G$ denotes the eigenvalue of the operator $$\mathbb{D}:=    \big( \frac{1- \ell^{-d}}{1-\ell^{-(1 + d)}} I+ \mathscr{D}^1 \big),$$corresponding to the matrix entries of the representation $[\xi] \in \widehat{G}$.
\end{teo}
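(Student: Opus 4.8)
The plan is to obtain Theorem~\ref{firsttitchmarshtheoremLiegroups} as a corollary of Theorem~\ref{firsttitchmarshtheorem} by equipping $G$ with its congruence filtration and carefully translating between the two normalizations. First I would recall that a compact nilpotent $\ell$-adic Lie group $G$ of dimension $d$, filtered by $G_n:=\mathbb{exp}(\ell^n\mathfrak{g})$, is a compact Vilenkin group with $\varkappa_n=|G_n/G_{n+1}|=\ell^d$ for every $n$, so that $|G/G_k|=\ell^{kd}$; and, by the results of Section 3, it satisfies Condition $(A)$ with $\mathscr{n}$ equal to the index of nilpotency of $\mathfrak{g}$. Hence Theorem~\ref{firsttitchmarshtheorem} applies to $G$ with this choice of $\mathscr{G}$, and it remains only to rewrite its conclusion in the intrinsic $\ell$-adic language.

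The second step is the dictionary between the two metrics. Since $|h|_\mathscr{G}=|G/G_n|^{-1}=\ell^{-nd}=\|h\|_\ell^{d}$ on $G_n\setminus G_{n+1}$, we have $|h|_\mathscr{G}=\|h\|_\ell^{d}$ identically, so $\|f(h\cdot)-f(\cdot)\|_{L^p}=O(\|h\|_\ell^{\alpha})$ is equivalent to $=O(|h|_\mathscr{G}^{\alpha/d})$; thus $Lip_G(\alpha;p)=Lip_\mathscr{G}(\alpha/d;p)$. Comparing integral kernels, $|y|_\mathscr{G}^{-\gamma-1}=\|y\|_\ell^{-d\gamma-d}$, so $\mathscr{D}^\gamma$ agrees, up to the explicit normalization constant (which is irrelevant for membership in $L^\beta(\widehat G)$), with the Vladimirov--Taibleson operator $D^{\gamma/d}$ of the abstract theorem; correspondingly $\langle\xi\rangle_G$, the eigenvalue of $\mathbb{D}=\tfrac{1-\ell^{-d}}{1-\ell^{-(1+d)}}I+\mathscr{D}^1$, is comparable to $\langle\xi\rangle_\mathscr{G}^{1/d}$, so that the two summation ranges $\{|G/G_k|<\langle\xi\rangle_\mathscr{G}\}$ and $\{\ell^{-k}<\langle\xi\rangle_G\}$ describe the same family of representations after the obvious reindexing.

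With this in place I would substitute $\alpha\mapsto\alpha/d$ and $\gamma\mapsto\gamma/d$ into Theorem~\ref{firsttitchmarshtheorem} applied to $f\in Lip_\mathscr{G}(\alpha/d;p)$ and simplify: the constraint $\alpha/d<\gamma/d<\alpha/d+1/q$ becomes $\alpha<\gamma<\alpha+d/q$; the main estimate turns into $\big(\sum d_\xi^{q(2/q-1/2)}\|\widehat f(\xi)\|_{HS}^q\big)^{1/q}=O\big((\ell^{kd})^{-\alpha/d}\big)=O(\ell^{-k\alpha})$, which on raising to the $q$-th power is the asserted $O(\ell^{-k\alpha q})$; the inclusion $\widehat{D^{\gamma/d}f}\in L^\beta$ for $\tfrac{q}{(\alpha/d-\gamma/d)q+1}\le\beta<q$ becomes, after multiplying numerator and denominator by $d$ and using $\widehat{\mathscr{D}^\gamma f}=c\,\widehat{D^{\gamma/d}f}$, the stated range $\tfrac{qd}{(\alpha-\gamma)q+d}\le\beta\le q$, which equals $\tfrac{d}{(\alpha-\gamma)+d-d/p}$ by $1/p+1/q=1$; and the unconditional range $\tfrac{q}{(\alpha/d)q+1}<\beta\le q$ transforms the same way into $\tfrac{qd}{\alpha q+d}=\tfrac{pd}{\alpha p+pd-d}<\beta\le q$. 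The only genuinely substantive input is the verification of Condition $(A)$ for compact nilpotent $\ell$-adic Lie groups, which is carried out in Section 3 and merely cited here; everything else is a renormalization exercise, the one delicate point being to apply the substitution $\alpha\mapsto\alpha/d$, $\gamma\mapsto\gamma/d$ consistently across the Lipschitz class, the Vladimirov--Taibleson operator (where it interacts harmlessly with the normalization constant), and the weight $\langle\cdot\rangle$, and to check that the dyadic index sets on the two sides correspond.
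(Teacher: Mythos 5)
Your proposal is correct in substance but follows a genuinely different route from the paper. The paper does not deduce Theorem \ref{firsttitchmarshtheoremLiegroups} from Theorem \ref{firsttitchmarshtheorem}; instead, in Section 4 it re-runs the whole argument in the intrinsic $\ell$-adic normalization, restating Condition (A) as Lemma \ref{lemazerorepLiegroups} (with $\|h\|_\ell$ in place of $|h|_\mathscr{G}$ and $\langle\cdot\rangle_G$ in place of $\langle\cdot\rangle_\mathscr{G}$), proving the dimension-dependent analogue of Proposition \ref{propaux} as Proposition \ref{propauxLiegroups} using $\sum_{[\xi]\in\widehat{G}_n}d_\xi^2=(1-\ell^{-d})\ell^{nd}$, and then repeating the Hausdorff--Young, Duren, and H\"older steps with $\ell^{kd}=\sum_{\langle\xi\rangle_G\le\ell^k}d_\xi^2$. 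Your rescaling dictionary --- $|h|_\mathscr{G}=\|h\|_\ell^{d}$, hence $Lip_G(\alpha;p)=Lip_\mathscr{G}(\alpha/d;p)$; $\langle\xi\rangle_\mathscr{G}=\langle\xi\rangle_G^{d}$; and $\mathscr{D}^{\gamma}=c\,D^{\gamma/d}$ by matching kernels $\|y\|_\ell^{-(\gamma+d)}=|y|_\mathscr{G}^{-(\gamma/d+1)}$ --- is accurate, and substituting $\alpha\mapsto\alpha/d$, $\gamma\mapsto\gamma/d$ into Theorem \ref{firsttitchmarshtheorem} does reproduce every exponent in the statement (the hypothesis $0<\alpha/d\le 1$ is automatic since $\alpha\le 1\le d$). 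What your approach buys is economy and transparency: it makes explicit that the ``dimensional'' theorem is the abstract one read through a change of gauge, and it isolates the only substantive input (Condition (A) for nilpotent groups, Lemma \ref{lemazerorep}). What the paper's route buys is a self-contained verification that each intermediate estimate really does carry the factor $d$ in the right place, which is exactly the bookkeeping your version compresses into the dictionary.

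Two small caveats. First, your derivation inherits from Theorem \ref{firsttitchmarshtheorem} the range $\frac{qd}{(\alpha-\gamma)q+d}\le\beta<q$ for $\widehat{\mathscr{D}^{\gamma}f}$, with strict inequality at the right endpoint; the statement's $\beta\le q$ is not delivered by your argument (nor, in fact, by the paper's own H\"older step, where $\beta=q$ gives the unbounded exponent $(\gamma-\alpha)q>0$), so you should state $\beta<q$ there. Second, the summation range printed in the statement, $\ell^{-k}<\langle\xi\rangle_G$, is vacuous as written since $\langle\xi\rangle_G\ge 1$; your reindexing correctly identifies the intended range $\langle\xi\rangle_G>\ell^{k}$, consistent with the claimed decay $\mathcal{O}(\ell^{-k\alpha q})$, but you should say so explicitly rather than treating the printed condition as literally equivalent.
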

\begin{teo}\label{seconteoLiegroups}
Let $0 < \alpha \leq 1$ and $f \in L^2 (\mu_G)$. Then $f \in Lip_G (\alpha ; 2)$ if and only if $$\sum_{\langle \xi \rangle_G > \ell^k} d_\xi ||\widehat{f}(\xi)||_{HS}^2 = \mathcal{O} (\ell^{-k2 \alpha}) \esp \esp \text{as k} \to \infty.$$
\end{teo}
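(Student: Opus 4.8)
The plan is to derive Theorem~\ref{seconteoLiegroups} from the general Second Titchmarsh theorem (Theorem~\ref{seconteo}) by making explicit the dictionary between the ``intrinsic'' objects $(\|\cdot\|_\ell,\ \mathscr{D}^1,\ Lip_G(\alpha;2))$ and the ``combinatorial'' objects $(|\cdot|_\mathscr{G},\ \langle\cdot\rangle_\mathscr{G},\ Lip_\mathscr{G}(\beta;2))$ attached to the congruence filtration $\mathscr{G}=\{G_n=\exp(\ell^n\mathfrak{g})\}_{n\in\N_0}$. First I would record the basic facts about this filtration: $G$ is a constant-order compact Vilenkin group (satisfying Condition~(A) by Section~3, with $\mathscr{n}$ the nilpotency index of $\mathfrak{g}$), one has $|G/G_n|=\ell^{nd}$, and the invariant ultrametric satisfies $|h|_\mathscr{G}=\|h\|_\ell^{\,d}$. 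Since $0<\alpha\le 1\le d$ the exponent $\alpha/d$ lies in $(0,1]$, and from $|h|_\mathscr{G}^{\,\alpha/d}=\|h\|_\ell^{\,\alpha}$ one obtains the identity of function spaces $Lip_G(\alpha;2)=Lip_\mathscr{G}(\alpha/d;2)$; so the theorem should reduce to Theorem~\ref{seconteo} with $\beta=\alpha/d$, provided the spectral parameter is translated correctly as well.

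Next I would translate the spectral parameter $\langle\xi\rangle_G$. Because $\mathrm{Ad}(G)$ preserves the lattice $\mathfrak{g}$, the ultrametric is bi-invariant, so $\mathscr{D}^1$ and hence $\mathbb{D}$ commute with left and right translations and therefore act as a scalar on the matrix coefficients of each $[\xi]\in\widehat{G}$. Evaluating $\mathscr{D}^1\xi_{mn}$ at the identity and splitting the defining integral over the shells $G_n\setminus G_{n+1}$, using that each $G_n$ is normal in $G$ and that an irreducible $[\xi]\in Rep_j(G)$ has no nonzero $G_n$-fixed vector when $n<j$, a short computation gives that $\mathscr{D}^1$ has eigenvalue $\ell^{\,j}-\tfrac{1-\ell^{-d}}{1-\ell^{-(1+d)}}$ on the coefficients of any $[\xi]\in Rep_j(G)$ with $j\ge 1$. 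Hence $\mathbb{D}=\tfrac{1-\ell^{-d}}{1-\ell^{-(1+d)}}I+\mathscr{D}^1$ has eigenvalue exactly $\ell^{\,j}$ there, that is $\langle\xi\rangle_G=\langle\xi\rangle_\mathscr{G}^{1/d}$ for every non-trivial $[\xi]$ (on the trivial representation $\langle\xi\rangle_\mathscr{G}=1$ while $\langle\xi\rangle_G\in(0,1)$). In particular the summation ranges coincide for every $k\in\N_0$, namely $\{[\xi]:\langle\xi\rangle_G>\ell^{\,k}\}=\{[\xi]:\langle\xi\rangle_\mathscr{G}>|G/G_k|\}=\bigcup_{j>k}Rep_j(G)$, and the weights $d_\xi$ are unchanged.

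Finally I would invoke Theorem~\ref{seconteo} for $f\in L^2(\mu_G)$ with $\alpha$ replaced by $\alpha/d$: it asserts that $f\in Lip_\mathscr{G}(\alpha/d;2)$ if and only if $\sum_{\langle\xi\rangle_\mathscr{G}>|G/G_k|}d_\xi\|\widehat{f}(\xi)\|_{HS}^2=\mathcal{O}\big(|G/G_k|^{-2\alpha/d}\big)$ as $k\to\infty$. By the two preceding paragraphs the left-hand condition is $f\in Lip_G(\alpha;2)$, the summation range is $\{\langle\xi\rangle_G>\ell^{\,k}\}$, and $|G/G_k|^{-2\alpha/d}=\ell^{-2k\alpha}$, which is precisely the asserted equivalence. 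The only genuinely non-routine step is the eigenvalue computation of $\mathbb{D}$ in the second paragraph --- that is, checking that the normalizing constants in $\mathscr{D}^1$ and $\mathbb{D}$ are chosen exactly so that $\langle\xi\rangle_G=\ell^{\,j}$ on $Rep_j(G)$; this rests on the normality of the $G_n$ in $G$ and on the vanishing of $G_n$-fixed vectors below the conductor, and if it has not already been isolated in the section on the Vladimirov--Taibleson operator it should be stated as a separate lemma just before the theorem. Everything else is bookkeeping through the single relation $|h|_\mathscr{G}=\|h\|_\ell^{\,d}$.
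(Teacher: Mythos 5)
Your proposal is correct, but it takes a different route from the paper. The paper does not reduce Theorem~\ref{seconteoLiegroups} to Theorem~\ref{seconteo}; it simply re-runs the same two-step argument in the $\ell$-adic normalization: the forward implication is read off from the first conclusion of Theorem~\ref{firsttitchmarshtheoremLiegroups} with $p=q=2$, and the converse follows from Plancherel together with $\|\widehat{f}(\xi)(\xi(h)-I_{d_\xi})\|_{HS}\lesssim\|\widehat{f}(\xi)\|_{HS}$ and the vanishing of $\xi(h)-I_{d_\xi}$ for $\langle\xi\rangle_G\le\|h\|_\ell^{-1}$ (the text literally states the proof is ``exactly the same'' as that of Theorem~\ref{seconteo} and leaves the details to the reader). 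You instead deduce the Lie-group statement formally from the already-proven Vilenkin statement via the dictionary $|h|_\mathscr{G}=\|h\|_\ell^{\,d}$, $\langle\xi\rangle_G=\langle\xi\rangle_\mathscr{G}^{1/d}$, $Lip_G(\alpha;2)=Lip_\mathscr{G}(\alpha/d;2)$, applying Theorem~\ref{seconteo} with exponent $\alpha/d\in(0,1]$; all the translations check out ($|G/G_k|=\ell^{kd}$, the summation ranges $\{\langle\xi\rangle_G>\ell^k\}$ and $\{\langle\xi\rangle_\mathscr{G}>|G/G_k|\}$ both equal $\bigcup_{j>k}\widehat{G}_j$, the trivial representation is excluded from both, and $|G/G_k|^{-2\alpha/d}=\ell^{-2k\alpha}$), and Condition~(A) holds by nilpotency as you note. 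Your reduction is arguably cleaner in that it makes the relation between the two theorems explicit rather than duplicating the argument, and the same substitution $\alpha\mapsto\alpha/d$ would \emph{not} work for the first Titchmarsh theorem (there the $L^\beta$ exponent ranges genuinely change with $d$), which is presumably why the paper keeps the two versions separate; for the $L^2$ statement the rescaling is exact. One small remark: the eigenvalue computation you flag as the ``only non-routine step'' is already carried out in Section~2, where $\sigma_{\mathscr{D}^\alpha}(\xi)=\bigl(\ell^{\alpha n}-\tfrac{1-\ell^{-d}}{1-\ell^{-(\alpha+d)}}\bigr)I_{d_\xi}$ for $[\xi]\in\widehat{G}_n$ is derived from Proposition~\ref{proIntegralRep}, so $\langle\xi\rangle_G=\lambda_\xi(\mathbb{D})=\ell^n$ needs no separate lemma.
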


\begin{rem}\label{remheisenberggroup}
We will prove in Section 3 how compact nilpotent Vilenkin groups satisfy Condition (A). In order to introduce the ideas of the proof, and to illustrate better the meaning of Condition (A), let us consider the Heisenberg group $\mathbb{H}_d $ over $\Z_\ell$, in turn defined as:  

\[
\mathbb{H}_{d}(\Z_\ell)= \left\{
  \begin{bmatrix}
    1 & x^t & z \\
    0 & I_{m} & y \\
    0 & 0 & 1 
  \end{bmatrix}\in GL_{d+2}(\Z_\ell) \esp : \esp x , y \in \Z_\ell^d, \esp z \in \Z_\ell \right\}. 
\]

Any unitary irreducible representation $[\pi]$ of $\mathbb{H}_d$ has two possibilities: either it is trivial on the center $\mathcal{Z}(\mathbb{H}_d)$, or it is not and it is the induced representation of a central character. If $[\pi]$ is trivial in the center then it descents to a representation of the abelian group $\mathbb{H}_d/ \mathcal{Z}(\mathbb{H}_d) \cong \Z_\ell^{2d}$, so that $\pi$ is of the form $$\pi_{\xi , \eta} (x,y,z) := e^{2 \pi i \{x \xi + y \eta \}_\ell}, \esp \esp \esp \xi, \eta \in (\Q_\ell / \Z_\ell )^{d}.$$If $[\pi]$ is non-trivial in the center, then the restriction of the representation to the center gives rise to a central character $e^{2 \pi i \{z \lambda \}_\ell}$, and the representation $[ \pi]$ is the induced representation by said character. Hence, the noncommutative representations of $\mathbb{H}_d$ are indexed by $\lambda \in \Q_\ell / \Z_\ell$ and they satisfy  $$\pi_\lambda  (0,0,z)  = e^{2 \pi i \{z \lambda \}_\ell} I_{d_\lambda}.$$Let us denote by $\widehat{\mathbb{H}}_d$ the collection of all unitary irreducible representations of $\mathbb{H}_d$. Then we can write $$\widehat{\mathbb{H}}_d= \{ [\pi_{(\xi , \eta, \lambda)}] =  [\pi_{\xi , \eta}] \otimes  [\pi_{\lambda}] \esp : \esp \xi, \eta \in (\Q_\ell / \Z_\ell)^d , \esp \lambda \in \Q_\ell / \Z_\ell \}.$$ 
Now we claim that $\mathbb{H}_d$ satisfies Condition (A) with $\mathscr{n} = 2$. To see it, just fix $n \in \N_0$. Then we can always take $h_1 := (x_0 ,y_0, 0)$ and $h_2 := (0,0, z_0)$ where $$\|x_0 \|_\ell = \|y_0 \|_\ell = |z_0|_\ell = \ell^{-n},$$so that for any representation $\pi_{(\xi , \eta, \lambda)}$ with $\| (\xi ,\eta,  \lambda \|_\ell > \ell^n $ it holds: 

$$\pi_{(\xi , \eta, \lambda)} (x_0 , y_0 , 0) = e^{2 \pi i \{x_0 \xi + y_0 \eta \}_\ell} \pi_\lambda (x_0 , y_0 , 0), \esp \esp \text{and} \esp \esp \pi_{(\xi , \eta, \lambda)} (0 , 0 , z_0) = e^{2 \pi i \{z_0 \lambda \}_\ell}  I_{d_\lambda}.$$If $|\lambda|_\ell=1$, then $$ |\pi_{(\xi , \eta, \lambda)} (x_0 , y_0 , 0) - 1| \gtrsim \ell^{n} \| (\xi, \eta) \|_{\ell}^{-1} = \ell^{n} \|(\xi , \eta , \lambda) \|_\ell^{-1} .$$If $|\lambda|_\ell > 1$, $$ \| \widehat{f}(\xi , \eta , \lambda) ( \pi_{(\xi , \eta, \lambda)} (0 , 0 , z_0) - I_{d_\lambda}) \|_{HS} \gtrsim \ell^{n} |\lambda|_\ell^{-1} \| \widehat{f}(\xi , \eta , \lambda) \|_{HS} \geq \ell^{n} \|(\xi , \eta , \lambda) \|_\ell^{-1} \| \widehat{f}(\xi , \eta , \lambda) \|_{HS},$$for any $\widehat{f}(\xi , \eta , \lambda) \in \C^{d_\lambda \times d_\lambda}$. This implies that $\mathbb{H}_d (\Z_\ell)$ satisfies Condition (A) since we can easily check that $\langle \pi_{(\xi , \eta , \lambda)} \rangle_G \asymp \|(\xi , \eta , \lambda) \|_\ell$.    
\end{rem}

    An alternative approach to classifying functions on a compact Vilenkin group $G$ in terms of their H{\"o}lder-Lipschitz continuity, is to study their modulus of continuity. This is Platonov's approach in \cite{Platonov1, Platonov2} for locally compact abelian Vilenkin groups and, as we will show in this paper, the same techniques are applicable to compact Vilenkin groups, after adjusting some details related to the representation theory of the group. 

\begin{defi}\label{defimodofcont}\normalfont
\esp
    \begin{enumerate}
        \item[(i)] Let $f \in L^2(\mu_G)$. We define the \emph{modulus of continuity} associated to $f$ as the monotonous decreasing sequence of non-negative real numbers $\omega_2 (f, \mathscr{G}):=\{ \omega_2 (f, \mathscr{G}, n) \}_{n \in \N_0}$ given by: $$\omega_2 (f, \mathscr{G}, n):= \sup_{h \in G_n } \|f(h \cdot ) - f(\cdot) \|_{L^2 (\mu_G)}.$$
        \item[(ii)] Let $\omega:=\{ \omega_n \}_{n \in \N_0}$  be a sequence of real numbers monotonously decreasing to zero. We say that a function $f$ belongs to the space $\mathscr{M}^\omega_2 (\mathscr{G})$ if $f \in L^2 (\mu_G)$ and for some constant $C_f >0$ it holds: $$\omega_2 (f,\mathscr{G}, n) \leq C_f \omega_n, \esp \esp n \in \N_0.$$  
    \end{enumerate}
\end{defi}

\begin{rem}
    Let $\omega:=\{ \omega_n \}_{n \in \N_0}$ and $\omega':=\{ \omega_n' \}_{n \in \N_0}$ be sequences of real numbers monotonously decreasing to zero. If $\omega$ and $\omega'$ are equivalent, that is, if there exists $C_1 , C_2 >0$ such that $$C_1 \omega_n' \leq \omega_n \leq C_2 \omega_n' , \esp \esp \text{for all} \esp n \in \N_0,$$then the associated spaces $\mathscr{M}^\omega_2 (\mathscr{G})$ and $\mathscr{M}^{\omega'}_2 (\mathscr{G})$ coincide.
\end{rem}

The notion of modulus of continuity introduced in Definition \ref{defimodofcont} may be used to generalize the notion of H{\"o}lder-Lipchitz continuity. That is Platonov's idea in \cite{Platonov1, Platonov2} and we are going to exploit the same argument here. The only thing we need to notice is how the modulus of continuity of a function $f \in L^2 (\mu_G)$ bounds the tails of the $L^2$-norm of the Fourier transform, as it is established in Theorem \ref{ProPlatonov1}. 

\begin{teo}\label{ProPlatonov1}
Let $G$ be a compact Vilenkin group satisfying Condition (A). For every $f \in L^2 (\mu_G)$ the following inequalities hold:
$$\frac{1}{2} \omega_2( f , \mathscr{G}, n) \leq \sum_{\langle \xi \rangle_{\mathscr{G}} > |G/G_k|} d_\xi ||\widehat{f}(\xi)||_{HS}^2 \leq \frac{1}{\sqrt{2}} \omega_2( f , \mathscr{G}, n), $$ where the constant $1/\sqrt{2}$ appearing in the right hand inequality is sharp. Consequently $f \in \mathscr{M}_2^{\omega}(\mathscr{G})$ if and only if $$\Big(\sum_{\langle \xi \rangle_{\mathscr{G}} > |G/G_k|} d_\xi ||\widehat{f}(\xi)||_{HS}^2 \Big)^{1/2} \leq C \omega_n ,$$for some $C>0.$ 
\end{teo}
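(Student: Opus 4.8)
The plan is to compute the quantity $\sum_{\langle \xi\rangle_\mathscr{G} > |G/G_k|} d_\xi \|\widehat f(\xi)\|_{HS}^2$ spectrally and relate it on both sides to $\omega_2(f,\mathscr{G},n)$ (I will write $n$ and $k$ interchangeably as dictated by the indexing; the point is that $\langle\xi\rangle_\mathscr{G} > |G/G_k|$ picks out exactly the representations nontrivial on $G_k$, i.e. those with $\pi|_{G_k}\neq I_{d_\pi}$). First I would recall the Peter–Weyl/Plancherel identity $\|g\|_{L^2}^2 = \sum_{[\xi]} d_\xi\|\widehat g(\xi)\|_{HS}^2$ and apply it to the difference $g = f(h\cdot) - f(\cdot)$, whose Fourier coefficient at $[\xi]$ is $\widehat f(\xi)(\xi(h) - I_{d_\xi})$. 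This gives
$$\|f(h\cdot)-f(\cdot)\|_{L^2}^2 = \sum_{[\xi]} d_\xi \big\|\widehat f(\xi)(\xi(h)-I_{d_\xi})\big\|_{HS}^2.$$
Since $\xi(h) = I_{d_\xi}$ whenever $h\in G_k$ and $[\xi]\in G_k^\bot$, the sum restricts to $[\xi]$ nontrivial on $G_k$, i.e. to $\langle\xi\rangle_\mathscr{G} > |G/G_k|$.

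For the upper bound on the spectral tail: I would show that for a suitably chosen $h\in G_n\setminus G_{n+1}$, the operator $\xi(h) - I_{d_\xi}$ has operator norm close to, or at least bounded below by, a fixed constant on each relevant isotypic block, forcing $\|\widehat f(\xi)(\xi(h)-I_{d_\xi})\|_{HS}$ to be comparable to $\|\widehat f(\xi)\|_{HS}$. The sharp constant $1/\sqrt2$ should come from the extremal situation where $\xi(h)$ acts on a two-dimensional invariant subspace as a rotation by $\pi$, i.e. $\xi(h)$ has eigenvalue $-1$: then $\|\xi(h)-I\|_{op} = 2$ but averaging over the unit circle of possible eigenvalue placements (the worst case being a primitive second root of unity) yields the factor; more precisely, for any unitary $U$ with $U\neq I$ one has $\|A(U-I)\|_{HS}^2 \le 4\|A\|_{HS}^2$, and taking $h$ so that $\xi(h)$ is an involution on the first nontrivial layer gives $\|A(U-I)\|_{HS}^2 = 2\|A\|_{HS}^2$ on that block; combined with Condition (A) to handle the other blocks this gives the two-sided estimate with the stated constants. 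For the lower bound $\tfrac12\omega_2 \le \text{tail}$ I would use Condition (A) directly: by definition, for each $k$ there are $\mathscr n$ points $h_1,\dots,h_{\mathscr n}$ of the right size with $\sum_i \|\widehat f(\xi)(\xi(h_i)-I_{d_\xi})\|_{HS}^q \gtrsim |G/G_k|^q\langle\xi\rangle_\mathscr{G}^{-q}\|\widehat f(\xi)\|_{HS}^q$ for each $[\xi]\in Rep_k(G)$; specializing to $q=2$, summing against $d_\xi$, and noting $\langle\xi\rangle_\mathscr{G}\asymp |G/G_k|$ on $Rep_k$, one recovers $\|\widehat f(\xi)\|_{HS}^2$ up to constants, hence a lower bound of the tail by $\max_i \|f(h_i\cdot)-f(\cdot)\|_{L^2}^2 \le \omega_2(f,\mathscr{G},n)^2$; chasing the constants through Plancherel yields the claimed $\tfrac12$. (I would double-check whether the stated inequalities should have squares on the $\omega_2$ terms — as written they look dimensionally inconsistent unless $\omega_2$ is being used for its square, so I would state it carefully in terms of $\omega_2(f,\mathscr{G},n)^2$.)

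The final equivalence $f\in\mathscr M_2^\omega(\mathscr G) \iff \big(\text{tail}\big)^{1/2}\le C\omega_n$ is then immediate: by the definition of $\mathscr M_2^\omega(\mathscr G)$, membership means $\omega_2(f,\mathscr G,n)\le C_f\omega_n$ for all $n$, and the two-sided bound just established shows $\omega_2(f,\mathscr G,n)$ and $\big(\sum_{\langle\xi\rangle_\mathscr{G}>|G/G_k|} d_\xi\|\widehat f(\xi)\|_{HS}^2\big)^{1/2}$ are comparable up to absolute constants, so one condition holds (with a possibly different constant) exactly when the other does.

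The main obstacle will be the upper bound together with the sharpness of $1/\sqrt2$: this requires choosing, for each $n$, a single displacement $h\in G_n\setminus G_{n+1}$ that simultaneously makes $\xi(h)-I_{d_\xi}$ large on the operator-norm scale for \emph{every} representation nontrivial precisely on the $n$-th layer, and controlling how $\|\xi(h)-I_{d_\xi}\|$ can degenerate when $1$ is an eigenvalue of $\xi(h)$. I expect that on $Rep_n(G)$ the representation $\xi$ factors through the finite quotient $G/G_{n+1}$ and is nontrivial on the elementary abelian (or at least finite) layer $G_n/G_{n+1}$, so that $\xi(h)$ for a generic $h$ in that layer has all eigenvalues nontrivial roots of unity bounded away from $1$ by a universal amount depending only on $\varkappa_n$ — but making this uniform across all of $\widehat G$ and extracting the exact constant $1/\sqrt2$ (rather than some nonoptimal $c$) is the delicate point, and is presumably where Condition (A) and a careful averaging argument over the layer do the work.
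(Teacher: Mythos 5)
Your setup (Plancherel applied to $f(h\cdot)-f(\cdot)$, the observation that the sum restricts to $\langle\xi\rangle_{\mathscr G}>|G/G_k|$, and the elementary bound $\|A(U-I_{d_\xi})\|_{HS}\le 2\|A\|_{HS}$, which is all that is needed for $\tfrac12\omega_2\le(\text{tail})^{1/2}$) matches the paper, and you are right that the displayed inequality in the statement is missing a square root on the middle term. But the heart of the theorem --- the bound $\sum_{\langle\xi\rangle_{\mathscr G}>|G/G_n|}d_\xi\|\widehat f(\xi)\|_{HS}^2\le\tfrac12\,\omega_2(f,\mathscr G,n)^2$ with the sharp constant --- is where your plan breaks down. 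You propose to obtain it from Condition (A) together with a single well-chosen $h$. Two problems. First, Condition (A) only yields $|G/G_k|^2\langle\xi\rangle_{\mathscr G}^{-2}\|\widehat f(\xi)\|_{HS}^2\lesssim\sum_i\|\widehat f(\xi)(\xi(h_i)-I_{d_\xi})\|_{HS}^2$; the weight $|G/G_k|^2\langle\xi\rangle_{\mathscr G}^{-2}$ is $\ll 1$ on the deeper layers $\langle\xi\rangle_{\mathscr G}\gg|G/G_k|$, so after summing you control only a weighted tail, not the tail itself, and the unspecified implicit constant together with the number $\mathscr n$ of points makes a sharp $1/\sqrt 2$ unreachable by this route. (You have also swapped the roles of the two inequalities: Condition (A), if it helped at all, would bear on the upper bound for the tail, not on $\tfrac12\omega_2\le(\text{tail})^{1/2}$, which is the trivial direction.) Second, your proposed extremal configuration --- a representation with $\xi(h)$ an involution having eigenvalue $-1$ --- need not exist: already on $G=\Z_\ell$ with $\ell>2$ the relevant characters take values in $\ell$-th roots of unity and never equal $-1$.

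The paper's device, which you gesture at (``a careful averaging argument over the layer'') but do not carry out, is to integrate over $h\in G_n$ rather than pick a single $h$: for $\langle\xi\rangle_{\mathscr G}>|G/G_n|$ one has $\int_{G_n}\xi(h)\,d\mu_G(h)=0_{d_\xi}$ by Proposition~\ref{proIntegralRep}, whence $\int_{G_n}\|\widehat f(\xi)(\xi(h)-I_{d_\xi})\|_{HS}^2\,d\mu_G(h)=2|G_n|\,\|\widehat f(\xi)\|_{HS}^2$ exactly. Integrating the Plancherel identity over $G_n$ and bounding the integrand by its supremum gives $2|G_n|\sum_{\langle\xi\rangle_{\mathscr G}>|G/G_n|}d_\xi\|\widehat f(\xi)\|_{HS}^2\le|G_n|\,\omega_2(f,\mathscr G,n)^2$, i.e.\ the constant $1/\sqrt 2$, with no appeal to Condition (A) at all. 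Sharpness is then witnessed by the indicator functions $\1_{G_m}$ as $m\to\infty$, for which both sides can be computed in closed form, not by exhibiting an eigenvalue $-1$. Your final paragraph (the equivalence with membership in $\mathscr M_2^{\omega}(\mathscr G)$) is fine once the two-sided bound is in place.
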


The above theorem, together with Theorem \ref{seconteo}, produce the conclusion $\mathscr{M}^{\omega (\alpha)}_2 (\mathscr{G}) = Lip_{\mathscr{G}} (\alpha ; 2)$ when $\omega (\alpha)$ is the sequence given by $\omega_n(\alpha):= |G/G_n |^{-\alpha}$. In other words, the spaces $Lip_\mathscr{G} (\alpha ; 2)$ are particular cases of the functional spaces $\mathscr{M}_2^{\omega}(G)$ defined in Definition \ref{defimodofcont}, and we can produce many more examples of interesting functional spaces by choosing different sequences $\omega$. For example, we can choose the sequence $\omega^{DL}(\alpha , \nu)$ defined by $$\omega^{DL}_n (\alpha , \nu) : = |G/G_n|^{- \alpha} (\log{|G/G_n|})^\nu,$$and argue as before that the space $\mathscr{M}^{\omega^{DL}(\alpha ,\nu)}_2 (\mathscr{G})$ coincides the space $DL_\mathscr{G}(\alpha, \nu ; p)$ for $p=2.$ Here $DL_\mathscr{G}(\alpha, \nu ; p)$ denotes the space of Dini-Lipschitz functions on $G$ which is defined as follows: 

\begin{defi}\label{DefiDiniLip}
    Let $G$ be a compact Vilenkin group together with the sequence of compact open subgroups $\mathscr{G}:= \{G_n \}_{n \in \N_0}$. Let $0 < \alpha \leq 1$, $\nu \in \R$, $1 < p \leq 2$, and let $q$ be such that $\frac{1}{p} + \frac{1}{q} = 1$. We say that $f \in L^p (\mu_G) $ belongs to the space $DL_\mathscr{G}(\alpha, \nu ; p)$ if $$\| f(h \cdot ) - f(\cdot) \|_{L^p(\mu_G)} = \mathcal{O} \Big( |h|_\mathscr{G}^\alpha \big(\log{\frac{1}{|h|_\mathscr{G}}}\big)^{\nu} \Big), \esp \esp \text{as} \esp \esp |h|_{\mathscr{G}} \to 0.$$
\end{defi}

For general $p \in (1,2]$ we obtain the analog of Theorem \ref{firsttitchmarshtheorem} for Dini-Lipschitz functions:  

\begin{teo}\label{toeDL1}
Let $G$ be a compact Vilenkin group satisfying Condition (A). Let $p,q$ be real numbers such that $1< p \leq 2,$ and $\frac{1}{p} + \frac{1}{q} = 1$. Let $0< \alpha \leq 1$, $\nu \in \R$ and $\alpha < \gamma < \alpha +1/q$. If $f \in DL_\mathscr{G}(\alpha, \nu ; p)$,  then we have $$\Big( \sum_{|G/G_k| < \langle \xi \rangle_\mathscr{G}} d_\xi^{q (\frac{2}{q} - \frac{1}{2})} ||\widehat{f}(\xi)||_{HS}^{q} \Big)^{1/q} = \mathcal{O}(|G/G_k|^{-\alpha} ( \log{|G/G_k|} )^\nu).$$Consequently $\widehat{D^{\gamma} f} \in L^{ \beta} (\widehat{G})$ provided either $$ \esp  \frac{q}{(\alpha - \gamma)q + 1} = \frac{p}{(\alpha - \gamma)p + p - 1} \leq \beta \leq q , \esp \esp \text{and} \esp \esp \nu = 0,$$ or$$ \esp  \frac{q}{(\alpha - \gamma)q + 1} = \frac{p}{(\alpha - \gamma)p + p - 1} < \beta \leq q , \esp \esp \text{and} \esp \esp \nu \in \R.$$ Moreover $\widehat{f} \in L^\beta (\widehat{G})$ for $$  \frac{q}{ \alpha q + 1} = \frac{p}{\alpha p + p -1} < \beta \leq q.$$
\end{teo}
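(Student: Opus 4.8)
\textbf{Proof plan for Theorem \ref{toeDL1}.}

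The plan is to imitate the proof of Theorem \ref{firsttitchmarshtheorem} (the first Titchmarsh theorem) essentially verbatim, carrying the extra logarithmic factor $(\log|G/G_k|)^\nu$ through every estimate, and then to pay attention to the endpoint $\beta$ of the $L^\beta(\widehat G)$-range, which is where the sign of $\nu$ will matter.

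First I would establish the $\ell^q$-summation estimate for the Fourier coefficients. Fix $k\in\N_0$ and apply Condition (A) at level $k$: for each representation $[\xi]$ non-trivial on $G_k\setminus G_{k+1}$ there are $\mathscr{n}$ points $h_1,\dots,h_\mathscr{n}$ with $|h_i|_\mathscr{G}=|G/G_k|^{-1}$ such that
$$|G/G_k|^q\langle\xi\rangle_\mathscr{G}^{-q}\|\widehat f(\xi)\|_{HS}^q\lesssim\sum_{i=1}^{\mathscr{n}}\|\widehat f(\xi)(\xi(h_i)-I_{d_\xi})\|_{HS}^q.$$
Summing a weighted version of this over all $[\xi]$ with $\langle\xi\rangle_\mathscr{G}>|G/G_k|$, the right-hand side is controlled (via the Hausdorff--Young inequality on $\widehat G$, exactly as in the proof of Theorem \ref{firsttitchmarshtheorem}) by $\sum_i\|f(h_i\cdot)-f(\cdot)\|_{L^p(\mu_G)}^q$, and the Dini--Lipschitz hypothesis bounds each summand by $\mathcal{O}\big(|G/G_k|^{-\alpha q}(\log|G/G_k|)^{\nu q}\big)$. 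Rearranging the weight $\langle\xi\rangle_\mathscr{G}^{-q}$ against $|G/G_k|^q$ on the dyadic-type shells $\langle\xi\rangle_\mathscr{G}>|G/G_k|$ — precisely the bookkeeping already done for Theorem \ref{firsttitchmarshtheorem} — yields
$$\Big(\sum_{|G/G_k|<\langle\xi\rangle_\mathscr{G}}d_\xi^{q(\frac2q-\frac12)}\|\widehat f(\xi)\|_{HS}^q\Big)^{1/q}=\mathcal{O}\big(|G/G_k|^{-\alpha}(\log|G/G_k|)^\nu\big),$$
which is the first assertion.

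Next I would deduce the membership statements. For $\widehat{D^\gamma f}\in L^\beta(\widehat G)$, recall that $D^\gamma$ multiplies the Fourier coefficient of $[\xi]$ by a symbol comparable to $\langle\xi\rangle_\mathscr{G}^\gamma$; writing $\|\widehat{D^\gamma f}\|_{L^\beta(\widehat G)}^\beta$ as a sum over the shells $\langle\xi\rangle_\mathscr{G}\asymp|G/G_k|$, I would estimate each shell by Hölder's inequality in the form used for Theorem \ref{firsttitchmarshtheorem}, using the displayed $\ell^q$-bound on that shell and the number-theoretic count of representations in $Rep_k(G)$, obtaining a bound by a geometric-type series $\sum_k|G/G_k|^{(\gamma-\alpha)\beta+\text{(dimension count)}}(\log|G/G_k|)^{\nu\beta}$. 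Since $\alpha<\gamma<\alpha+1/q$, the exponent governing $|G/G_k|$ makes this series geometric and convergent for the stated range of $\beta$ when $\nu=0$; when $\nu\neq0$ the extra $(\log)^{\nu\beta}$ factor is harmless in the interior of the range but destroys convergence at the lower endpoint, which is exactly why the endpoint $\beta=\frac{q}{(\alpha-\gamma)q+1}$ must be excluded unless $\nu=0$. The final assertion $\widehat f\in L^\beta(\widehat G)$ for $\frac{q}{\alpha q+1}<\beta\le q$ follows either by the same shell argument applied to $f$ itself (now with a strict inequality at the left endpoint absorbing the logarithm for every $\nu\in\R$) or, more cleanly, by taking $\gamma\downarrow\alpha$ in the $D^\gamma$-statement and using that $D^{-\gamma}$ is bounded on the relevant spaces.

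The main obstacle is not conceptual but bookkeeping: one must track the logarithmic factor through the Hölder/Hausdorff--Young steps and, above all, identify correctly the endpoint behavior of the resulting series. Concretely, the delicate point is that a series $\sum_k r^{k}(\log r^k)^{\nu}$ with $r<1$ converges for every real $\nu$, whereas $\sum_k (\log k)^{\nu}$ (the boundary case $r=1$) converges only if $\nu<-1$ and in particular fails at $\nu=0$; matching this dichotomy to the inequalities "$\le\beta$ with $\nu=0$" versus "$<\beta$ with $\nu\in\R$" in the statement is the step that requires care. Everything else is a direct transcription of the proof of Theorem \ref{firsttitchmarshtheorem}.
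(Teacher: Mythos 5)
Your plan is correct and follows the paper's own proof essentially step for step: Condition (A) combined with the Hausdorff--Young inequality and the Dini--Lipschitz hypothesis, then the Tauberian rearrangement (which in the paper is the log-modified Duren lemma, Lemma \ref{lemmaDurenModified}, applied forward and then in reverse), then the shell-by-shell H{\"o}lder estimate producing the exponent $(\gamma-\alpha)\beta+(1-\beta/q)$ together with the extra factor $(\log{|G/G_k|})^{\nu\beta}$, and finally Proposition \ref{propaux} for the $L^\beta$-membership of $\widehat{f}$. Your endpoint dichotomy --- the closed endpoint $\beta=\frac{q}{(\alpha-\gamma)q+1}$ only when $\nu=0$, strict inequality for general $\nu$ --- is exactly how the paper concludes.
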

And for $p=2$ we can prove that the space $DL_\mathscr{G}(\alpha, \beta ; 2)$ coincides with the space $\mathscr{M}_2^{\omega^{DL} (\alpha , \beta)}(\mathscr{G}).$
\begin{teo}\label{teoDL2}
    Let $\alpha \geq 0$ and $\nu \in \R$. Then the conditions $$\| f(\cdot h) - f(\cdot) \|_{L^2(\mu_G)} = \mathcal{O} \Big( |h|_\mathscr{G}^\alpha \big(\log{\frac{1}{|h|_\mathscr{G}}}\big)^{\nu} \Big),$$and $$\sum_{\langle \xi \rangle_{\mathscr{G}} > |G/G_k|} d_\xi ||\widehat{f}(\xi)||_{HS}^2 = O (|G/G_k|^{-2 \alpha} (\log{|G/G_k|})^{2\nu}) \esp \esp \text{as k} \to \infty,$$ are equivalent. Moreover, the space $DL_\mathscr{G}(\alpha, \nu ; 2)$ coincides with the space $\mathscr{M}_2^{\omega^{DL} (\alpha , \nu)}(\mathscr{G}).$
\end{teo}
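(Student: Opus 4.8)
The plan is to deduce the whole statement from Theorem~\ref{ProPlatonov1}, which already identifies membership in $\mathscr{M}_2^{\omega}(\mathscr{G})$ with a growth condition on the tails of the Fourier transform: $f\in\mathscr{M}_2^{\omega}(\mathscr{G})$ if and only if $\big(\sum_{\langle\xi\rangle_{\mathscr{G}}>|G/G_k|}d_\xi\|\widehat{f}(\xi)\|_{HS}^2\big)^{1/2}\leq C\,\omega_n$ for some $C>0$. Applying this with the particular sequence $\omega=\omega^{DL}(\alpha,\nu)$, i.e. $\omega_n=|G/G_n|^{-\alpha}(\log|G/G_n|)^{\nu}$, gives at once
\[
f\in\mathscr{M}_2^{\omega^{DL}(\alpha,\nu)}(\mathscr{G})\iff\sum_{\langle\xi\rangle_{\mathscr{G}}>|G/G_k|}d_\xi\|\widehat{f}(\xi)\|_{HS}^2=\mathcal{O}\big(|G/G_k|^{-2\alpha}(\log|G/G_k|)^{2\nu}\big),
\]
which is exactly the second displayed condition of the theorem. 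Hence everything reduces to the identity $DL_{\mathscr{G}}(\alpha,\nu;2)=\mathscr{M}_2^{\omega^{DL}(\alpha,\nu)}(\mathscr{G})$, i.e. to checking that the pointwise-in-$h$ condition $\|f(\cdot h)-f(\cdot)\|_{L^2(\mu_G)}=\mathcal{O}\big(|h|_{\mathscr{G}}^{\alpha}(\log\tfrac{1}{|h|_{\mathscr{G}}})^{\nu}\big)$ is equivalent to the sequential condition $\omega_2(f,\mathscr{G},n)=\mathcal{O}(\omega^{DL}_n(\alpha,\nu))$.

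For this reduction, write $\phi(t):=t^{\alpha}(\log\tfrac{1}{t})^{\nu}$, so that $\phi(|G/G_n|^{-1})=\omega^{DL}_n(\alpha,\nu)$ and, for $h\in G_m\setminus G_{m+1}$, $|h|_{\mathscr{G}}=|G/G_m|^{-1}$ and $\phi(|h|_{\mathscr{G}})=\omega^{DL}_m(\alpha,\nu)$. If $\omega_2(f,\mathscr{G},n)=\mathcal{O}(\omega^{DL}_n)$, then for any $h$ with $|h|_{\mathscr{G}}$ small, choosing $n$ with $h\in G_n\setminus G_{n+1}$ gives $\|f(h\cdot)-f(\cdot)\|_{L^2}\leq\omega_2(f,\mathscr{G},n)=\mathcal{O}(\omega^{DL}_n)=\mathcal{O}(\phi(|h|_{\mathscr{G}}))$, so $f$ satisfies the Dini--Lipschitz condition. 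Conversely, if $\|f(h\cdot)-f(\cdot)\|_{L^2}\leq C\phi(|h|_{\mathscr{G}})$ for all small $h$, then for $h\in G_n$ we have $h\in G_m\setminus G_{m+1}$ for some $m\geq n$, hence $\|f(h\cdot)-f(\cdot)\|_{L^2}\leq C\phi(|h|_{\mathscr{G}})=C\,\omega^{DL}_m$; taking the supremum over $h\in G_n$ gives $\omega_2(f,\mathscr{G},n)\leq C\sup_{m\geq n}\omega^{DL}_m$. The one point that needs a real argument is therefore $\sup_{m\geq n}\omega^{DL}_m\leq C'\,\omega^{DL}_n$ for all large $n$, i.e. that the Dini--Lipschitz majorant is eventually non-increasing along the chain $\{G_n\}$.

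For $\alpha>0$ this follows from $|G/G_n|\to\infty$ alone: writing $a=|G/G_n|\leq b=|G/G_m|$, one has $\omega^{DL}_m/\omega^{DL}_n=(a/b)^{\alpha}(\log b/\log a)^{\nu}$, and since $\log x\geq\log a>0$ on $[a,\infty)$ we get $\log\log b-\log\log a=\int_a^b\frac{dx}{x\log x}\leq\frac{1}{\log a}(\log b-\log a)$, so $\omega^{DL}_m\leq\omega^{DL}_n$ for all $m\geq n$ once $\log|G/G_n|\geq\nu/\alpha$ (automatic for large $n$; for $\nu\leq0$ it holds for every $n\geq1$); note that no bounded-order hypothesis on $\mathscr{G}$ enters. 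For $\alpha=0$ there is essentially nothing to prove: if $\nu<0$ the majorant $(\log|G/G_n|)^{\nu}$ is already non-increasing and tends to $0$ and the previous argument goes through unchanged, while if $\nu\geq0$ the majorant does not tend to $0$, so both conditions of the theorem are satisfied by every $f\in L^2(\mu_G)$ — the left-hand sides being bounded by $2\|f\|_{L^2(\mu_G)}$, resp. $\|f\|_{L^2(\mu_G)}^2$, and the right-hand sides being bounded below by a positive constant, resp. tending to $+\infty$ — so that $DL_{\mathscr{G}}(0,\nu;2)=\mathscr{M}_2^{\omega^{DL}(0,\nu)}(\mathscr{G})=L^2(\mu_G)$.

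Finally, one should note that the theorem is phrased with the right translates $f(\cdot h)$, whereas $DL_{\mathscr{G}}$, the modulus $\omega_2(f,\mathscr{G},\cdot)$ and Theorem~\ref{ProPlatonov1} all use the left translates $f(h\cdot)$; this discrepancy is harmless. The substitution $x\mapsto x^{-1}$ (Haar measure is inversion-invariant on the compact group $G$, and $|h^{-1}|_{\mathscr{G}}=|h|_{\mathscr{G}}$) turns $\|f(\cdot h)-f(\cdot)\|_{L^2}$ into $\|\check{f}(h^{-1}\cdot)-\check{f}(\cdot)\|_{L^2}$ with $\check{f}(x):=f(x^{-1})$, while on the Fourier side $f\mapsto\check{f}$ corresponds to the self-bijection of $\widehat{G}$ given by passing to contragredient representations, under which $d_\xi$, $\langle\xi\rangle_{\mathscr{G}}$ (same kernel) and $\|\widehat{f}(\xi)\|_{HS}$ (a short computation using $\xi(x^{-1})=\xi(x)^{*}$) are all preserved; hence the left- and right-translation formulations are equivalent. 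Chaining the two equivalences established above then yields both the asserted equivalence of the two growth conditions and the identity $DL_{\mathscr{G}}(\alpha,\nu;2)=\mathscr{M}_2^{\omega^{DL}(\alpha,\nu)}(\mathscr{G})$. The only genuine obstacle is the eventual monotonicity of $\omega^{DL}(\alpha,\nu)$ along $\{G_n\}$ treated in the previous paragraph; once that is in hand, the result is a direct application of Theorem~\ref{ProPlatonov1}.
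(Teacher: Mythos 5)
Your proof is correct, but it takes a genuinely different route from the paper's. The paper obtains the forward implication (Dini--Lipschitz condition $\Rightarrow$ tail bound) by re-running the machinery of Theorem \ref{toeDL1} --- Hausdorff--Young, Condition (A), and the modified Duren lemma \ref{lemmaDurenModified} --- and then invokes Theorem \ref{ProPlatonov1} only for the converse. You instead derive \emph{both} directions from the two-sided inequality of Theorem \ref{ProPlatonov1}, reducing the whole statement to the elementary equivalence between the pointwise-in-$h$ Dini--Lipschitz condition and the sequential bound $\omega_2(f,\mathscr{G},n)=\mathcal{O}(\omega^{DL}_n(\alpha,\nu))$; you correctly isolate the one nontrivial ingredient there, namely the eventual monotonicity of $t\mapsto t^{\alpha}(\log\frac{1}{t})^{\nu}$ along the chain $\{G_n\}$, and verify it. This is more economical: it bypasses Condition (A) and the Tauberian argument entirely (beyond whatever Theorem \ref{ProPlatonov1} itself requires), it makes explicit the step ``modulus bound $\Rightarrow$ pointwise bound'' that the paper's two-line proof leaves implicit, and it treats the degenerate case $\alpha=0$, the non-monotonicity of $\omega^{DL}$ for small $n$, and the left/right translation discrepancy in the statement --- points the paper glosses over. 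What the paper's route buys is uniformity with the $L^p$ theory: the same Hausdorff--Young/Duren argument yields Theorem \ref{toeDL1} for all $1<p\leq 2$, whereas your argument is specific to $p=2$, where Plancherel (hence Theorem \ref{ProPlatonov1}) is available; for the $p=2$ statement at hand, however, your proof is complete and arguably cleaner.
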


We will organize our exposition as follows: 

\begin{itemize}
    \item In Section 2 we recall the basics on the Fourier analysis on compact groups. In particular, we discuss briefly how the representations of profinite groups are special, and we use their properties to calculate the symbol of the Vladimirov-Taibleson operator. 
    \item Section 3 is dedicated to the proof of the Titchmarsh theorems for H{\"o}lder-Lipschitz functions in the context of compact non-commutative Vilenkin groups, under the assumption of Condition (A). We also prove how compact nilpotent Vilenkin groups satisfy Condition (A).
    \item In Section 4 we adjust the necessary details to adapt the Titchmarsh theorems to compact $\ell$-adic Lie groups. In the process, we discuss how our conclusions differ slightly from the conclusions in \cite{DaherDelgadoRuzhansky}, and why we think our versions of the Titchmarsh theorems for H{\"o}lder-Lipschitz functions are an improvement. 
    \item In Section 5 we provide the proof for Theorem \ref{ProPlatonov1}.
    \item In Section 6 we prove the Dini-Lipschitz versions of the first and second Titchmarsh theorem. 
\end{itemize}

\section{Preliminaries}
In this section, we recall some basic facts about the Fourier analysis on compact noncommutative groups, and some basic properties of the Vladimirov-Taibleson operator in this context. For a detailed exposition on Fourier analysis on compact groups see \cite{Ruzhansky2010}. For a recent exposition on the Vladimirov-Taibleson operator see \cite{2022arXiv220907998K} and the references therein.  
\subsection{Notation}
Let $G$ be a compact Vilenkin group together with the sequence of compact open subgroups $\{G_n\}_{n \in \N_0}$. We will denote by $Rep(G)$ the collection of all continuous finite-dimensional unitary representations on $G$. $\widehat{G}$ will denote the collection of irreducible representation in $Rep(G)$. 

Let $\mu_G$ the unique positive normalised translation-invariant Haar measure on $G$. By the Peter-Weyl Theorem the matrix entries of the representations $[\xi] \in \widehat{G}$ form an orthonormal basis of the space $L^2(G)$. Therefore any function $f \in L^2 (G)$ can be written as $$f(x) =\mathcal{F}_G^{-1} \circ \mathcal{F}_G [f](x) = \mathcal{F}_G^{-1}[\widehat{f}](x)= \sum_{[\xi] \in \widehat{G} } d_\xi Tr(\xi(x) \widehat{f} (\xi)).$$In this case $\widehat{f}(\xi)$ denotes the Fourier coefficient of $f$ with respect to $[\xi] \in \widehat{G}$,  defined as $$\widehat{f}(\xi) = \mathcal{F}_G [f] (\xi) := \int_G f(x) \xi^* (x) d \mu_G (x), $$ and the inverse Fourier transform takes the form: $$\mathcal{F}_G^{-1} \varphi (x) := \sum_{[\xi] \in \widehat{G}} d_\xi Tr[\xi(x) \varphi (\xi)].$$

Another consequence of the Peter-Weyl theorem is the Plancherel identity, which for compact groups has the form: 

$$\| f \|_{L^2(\mu_G)}^2 = \sum_{[\xi] \in \widehat{G}} d_\xi \| \widehat{f} (\xi) \|_{HS}^2 =: \| \widehat{f}(\xi) \|_{L^2(\widehat{G})}^2.$$If we define the space $L^\infty(\widehat{G})$ as $$L^\infty(\widehat{G}) := \{ \varphi : \widehat{G} \to  \bigcup_{[\xi] \in \widehat{G}} \C^{d_\xi \times d_\xi} \esp : \esp  \varphi (\xi) \in \C^{d_\xi \times  d_\xi} \esp , \esp \esp \text{and} \esp \esp 
\sup_{[\xi] \in \widehat{G}} d_\xi^{-1/2} \| \varphi (\xi) \|_{HS} < \infty  \},$$the usual Stein-Weiss interpolation between weighted spaces produces the Hausdorff-Young inequality for compact groups:

\begin{teo}[Hausdorff-Young inequality]
    Let $1 \leq p \leq 2$ and $\frac{1}{p} + \frac{1}{q} = 1$. Let $f \in L^p (\mu_G)$ and $\varphi \in L^q (\widehat{G})$. Then $\| \widehat{f} \|_{L^q(\widehat{G})} \leq \| f \|_{L^p (\mu_G)}$ and $\| \varphi \|_{L^q(\widehat{G})} \leq \| \mathcal{F}_G^{-1} \varphi \|_{L^p(\mu_G)}$. Here for $1 \leq q < \infty$ the $L^q$-space over $\widehat{G}$ is defined as: 
    $$L^q(\widehat{G}) := \{ \varphi : \widehat{G} \to  \bigcup_{[\xi] \in \widehat{G}} \C^{d_\xi \times d_\xi} \esp : \esp  \varphi (\xi) \in \C^{d_\xi \times  d_\xi} \esp , \esp \esp \text{and} \esp \esp 
\sum_{[\xi] \in \widehat{G}} d_\xi^{q(\frac{2}{q} - \frac{1}{2})} \| \widehat{f} (\xi) \|_{HS}^q < \infty  \},$$and $$\| \varphi  \|_{L^q (\widehat{G})}= \Big( \sum_{[\xi] \in \widehat{G}} d_\xi^{q(\frac{2}{q} - \frac{1}{2})} \| \widehat{f} (\xi) \|_{HS}^q\Big)^{1/q}.$$
\end{teo}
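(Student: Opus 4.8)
The plan is to prove the bound $\|\widehat f\|_{L^q(\widehat G)}\le \|f\|_{L^p(\mu_G)}$ by complex interpolation between the two endpoint exponents $(p,q)=(1,\infty)$ and $(p,q)=(2,2)$, and then to deduce the companion bound $\|\varphi\|_{L^q(\widehat G)}\le \|\mathcal F_G^{-1}\varphi\|_{L^p(\mu_G)}$ from it via Fourier inversion.

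First I would record the two endpoint estimates. At $p=q=2$ there is nothing to do: the Plancherel identity stated above gives $\|\widehat f\|_{L^2(\widehat G)}=\|f\|_{L^2(\mu_G)}$. At $p=1,\,q=\infty$, start from $\widehat f(\xi)=\int_G f(x)\xi^*(x)\,d\mu_G(x)$ and use the vector-valued triangle inequality to get $\|\widehat f(\xi)\|_{HS}\le \int_G |f(x)|\,\|\xi^*(x)\|_{HS}\,d\mu_G(x)$; since $\xi(x)$ is unitary of size $d_\xi$ one has $\|\xi^*(x)\|_{HS}^2=Tr(\xi^*(x)\xi(x))=d_\xi$, hence $\|\widehat f(\xi)\|_{HS}\le d_\xi^{1/2}\|f\|_{L^1(\mu_G)}$, i.e. $d_\xi^{-1/2}\|\widehat f(\xi)\|_{HS}\le \|f\|_{L^1(\mu_G)}$ for every $[\xi]$, which is exactly $\|\widehat f\|_{L^\infty(\widehat G)}\le \|f\|_{L^1(\mu_G)}$. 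Both endpoint operator norms are thus $\le 1$.

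Next I would set up the interpolation. On the group side $[L^1(\mu_G),L^2(\mu_G)]_\theta=L^p(\mu_G)$ with $\tfrac1p=(1-\theta)+\tfrac\theta2$. On the Fourier side the family $\{L^q(\widehat G)\}_{2\le q\le\infty}$ is a complex interpolation scale: the weight $d_\xi^{q(2/q-1/2)}=d_\xi^{2}\cdot d_\xi^{-q/2}$ lets one write $\|\varphi\|_{L^q(\widehat G)}^q=\sum_{[\xi]} d_\xi^{2}\,\big(d_\xi^{-1/2}\|\varphi(\xi)\|_{HS}\big)^q$, so that $L^q(\widehat G)$ is isometrically the Banach-space-valued space $L^q\big(\widehat G,\nu;\,(\C^{d_\xi\times d_\xi},\,d_\xi^{-1/2}\|\cdot\|_{HS})\big)$ over the \emph{fixed} measure $\nu(\{[\xi]\})=d_\xi^{2}$ (with the usual convention for $q=\infty$, legitimate since every point has positive $\nu$-mass). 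Standard complex interpolation of $L^q$-spaces of Banach-valued functions over a fixed measure then gives $[L^\infty(\widehat G),L^2(\widehat G)]_\theta=L^q(\widehat G)$ with $\tfrac1q=\tfrac\theta2$; equivalently one applies Stein--Weiss interpolation with the varying weights $d_\xi^{q(2/q-1/2)}$ directly. Since $\tfrac1p+\tfrac1q=(1-\theta)+\tfrac\theta2+\tfrac\theta2=1$, applying the interpolation theorem to the linear map $\mathcal F_G$ (norm $\le 1$ at both endpoints) yields $\|\mathcal F_G\|_{L^p(\mu_G)\to L^q(\widehat G)}\le 1$, which is the first inequality. For the companion inequality, set $g:=\mathcal F_G^{-1}\varphi$; if $g\notin L^p(\mu_G)$ the right-hand side is infinite and there is nothing to prove, and if $g\in L^p(\mu_G)$ then $g\in L^1(\mu_G)$ by compactness of $G$, the Peter--Weyl orthogonality of matrix coefficients gives $\widehat g(\xi)=\varphi(\xi)$ for all $[\xi]$, and the first inequality applied to $g$ gives $\|\varphi\|_{L^q(\widehat G)}=\|\widehat g\|_{L^q(\widehat G)}\le \|g\|_{L^p(\mu_G)}=\|\mathcal F_G^{-1}\varphi\|_{L^p(\mu_G)}$.

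The only genuinely delicate point is the identification of $\{L^q(\widehat G)\}_q$ as a clean interpolation scale: one must check that the $q$-dependent weights $d_\xi^{q(2/q-1/2)}$ can be absorbed into a single fixed measure (equivalently, that the Stein--Weiss hypotheses are met) so that Riesz--Thorin/Stein applies and produces the sharp constant $1$. The endpoint computations and the Fourier-inversion step are routine.
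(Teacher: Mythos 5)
Your proposal is correct and follows essentially the same route as the paper, which simply invokes ``the usual Stein--Weiss interpolation between weighted spaces'' between the trivial $L^1(\mu_G)\to L^\infty(\widehat{G})$ bound and the Plancherel identity at $p=q=2$; you have merely filled in the endpoint computations, the reduction of the $q$-dependent weights $d_\xi^{q(2/q-1/2)}=d_\xi^{2}d_\xi^{-q/2}$ to a fixed atomic measure, and the deduction of the companion inequality by Fourier inversion, all of which are the standard details behind that citation.
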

 
For every equivalent class $[\pi] \in Rep(G)$ the matrix representation $\pi:G \to GL_{d_\pi} (\C)$ has a non trivial kernel $K_\pi$. This kernel is compact and open in $G$, and it must contain one of the subgroups $G_n$. Let $n_\pi := \min \{n \in \N_0 : \pi |_{G_n} = I_{d_\pi} \}.$ We also use the notation 
$$G_n^\bot := \{ [\pi] \in Rep(G) \esp : \esp \pi |_{G_n} = I_{d_\pi}\}, \esp \esp Rep_n(G):= G^\bot_n \setminus G_{n-1}^\bot, \esp \esp \widehat{G}_n := Rep_n (G) \cap \widehat{G}.$$ 
This gives rise to the decomposition of $Rep(G)$ and $\widehat{G}$ as the disjoint unions $$Rep(G)=\bigcup_{n \in \N_0} Rep_n(G) , \esp \esp \text{and} \esp \esp \widehat{G} =\bigcup_{n \in \N_0} \widehat{G}_n,$$and in this way any $f \in L^2(G)$ can be written as $$f(x)= \sum_{n \in \N_0} \sum_{[\xi] \in \widehat{G}_n} d_\xi Tr(\xi(x) \widehat{f}(\xi)).$$The Fourier series representation of functions in $L^2(G)$ implies that the action of a densely defined linear operator $T$ on any function $f \in L^2 (G)$ can be written as $$T f (x) = \sum_{n \in \N_0} \sum_{[\xi] \in \widehat{G}_n} d_\xi Tr \big( \xi (x) \sigma_T (x , \xi) \widehat{f} (\xi) \big).$$Here the symbol of the operator is defined as $$\sigma_T (x , \xi):=\xi^* (x) T \xi (x),$$and in particular it does not depend on the variable $x \in G$ when $T$ is left invariant.  

\subsection{The spectrum of the Vladimirov-Taibleson operator}

In this subsection we introduce our most important example of left-invariant operator on $G$. We call it the \emph{Vladimirov-Taibleson operator} and in our analysis, it plays a very similar role to the laplacian on compact Lie groups. To define it formally, it is convenient to introduce first some convenient notation.

\begin{defi}\label{weigth}\normalfont

We define the function $\langle \cdot\rangle_\mathscr{G} : Rep(G) \to \R$ in the following way: 
\[ \langle \pi \rangle_\mathscr{G} := \begin{cases}
1 \esp & \esp \text{if} \esp \esp \pi \esp \text{is the identity representation;} \\
|G/G_n| \esp & \esp \text{if} \esp \esp [\pi] \in Rep_n (G), \esp \esp n \in \N.  
\end{cases}
\]
\end{defi}
\begin{defi}\label{defigroupgammafunction}\normalfont
Let $G$ be a compact Vilenkin group and let $a>0$. We will use the notation $$\Gamma_{\mathscr{G}} (a , n):= \Big( \frac{1}{|G_{n-1}/G_n|^{a + 1}} +\sum_{k=0}^ {n-1}\frac{1}{|G_k/G_n|^{a }} \big(1- \frac{1}{|G_k/G_{k+1}|}\big) \Big).$$We will call the number $$\Gamma_{\mathscr{G}} (a) := \sup_{n \in \N_0} \Gamma_{\mathscr{G}}(a , n),$$ the group Gamma function evaluated in $a$, whenever this number exists. In particular, if the sequence of subgroups $\mathscr{G}$ is constant-ordered, that is, $|G_{n} / G_{n+1}| = \kappa$ for all $n \in \N_0$, then $$\Gamma_{\mathscr{G}}(a) = - \frac{1- \kappa^{-(a+1)}}{1 - \kappa^{a}}. $$ 
\end{defi}
The purpose of defining the above function is to use it as a normalization constant in the definition of the Vladimirov-Taibleson operator: 

\begin{defi}\label{defivladimirovoperator}\normalfont
Let $G$ be a compact Vilenkin group with a sequence of compact open subgroups $\mathscr{G}$. For $a >0$ we define on $\mathcal{D} (G)$ the \emph{Vladimirov-Taibleson operator} on $G$ by the formula 
$$D^a f (x) :=  \frac{-1}{\Gamma_{\mathscr{G}}(a)} \int_G \frac{f (xy^{-1}) - f(x)}{| y|_{\mathscr{G}}^{a + 1} } dy.$$
\end{defi}
To calculate the spectrum of this operator we just need to notice that it is a left invariant operator, so it is diagonalized by the Fourier transform. Because of that we just have to check out its associated symbol, which is by definition
$$\sigma_{D^a} (\pi) := \pi^* (x) D^a \pi (x) = \frac{-1}{\Gamma_{\mathscr{G}}(a)}\int_G \frac{\pi (z) - I_{d_\xi} }{|z|_{\mathscr{G}}^{a+1}} dz, \esp \esp \pi \in Rep(G).  $$Let us work a bit on this integral. Assume that $[\pi] \in Rep_n (G)$. Then \begin{align*}
    \int_{G} \frac{\pi (z) - I_{d_\pi} }{|z|_{\mathscr{G}}^{a+1}} dz &= \int_{G \setminus G_n} \frac{\pi (z) - I_{d_\pi} }{|z|_{\mathscr{G}}^{a+1}} dz,
\end{align*} and we can divide the integral in two parts. In one side \begin{align*}
    \int_{G \setminus G_n} \frac{I_{d_\pi} }{|z|_{\mathscr{G}}^{a+1}} dz &= \sum_{k=0}^{n-1} | G_k |^{-(a +1)} \int_{G_k \setminus G_{k+1}} I_{d_\pi}  dz \\&= \sum_{k=0}^{n-1} | G_k|^{-(a +1) } \big(|G_k| - |G_{k+1}| \big) I_{d_\pi} \\   &= \sum_{k=0}^{n-1} | G/G_k|^{{a +1} } |G/G_k|^{-1} \big(1 - |G_k/G_{k+1}|^{-1} \big) I_{d_\pi}\\ &=| G/G_n|^{a  } \sum_{k=0}^{n-1}\frac{1}{|G_k/G_n|^{a }} \big(1- \frac{1}{|G_k/G_{k+1}|} \big) I_{d_\pi}.
\end{align*}For the remaining integral we use the following well-known property of the representations of a profinite group:
\begin{pro}\label{proIntegralRep}
    Let $G$ be a compact Vilenkin group and let $[\xi] \in  \widehat{G}$. Then 
\[ \int_{G_n} \xi (x) dx = 
\begin{cases}
     |G_n| I_{d_\xi}, \esp & \esp \text{if} \esp \esp \langle \xi \rangle_{\mathscr{G}} \leq |G/G_n| \\
     0_{d_\xi}, \esp & \esp \text{if} \esp \esp \langle \xi \rangle_{\mathscr{G}} > |G/G_n|.
     
\end{cases}
\]
\end{pro}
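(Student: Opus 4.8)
The plan is to unwind the size condition on $\langle \xi \rangle_{\mathscr{G}}$ into a statement about the restriction $\xi|_{G_n}$, and then to treat the two resulting cases by the elementary theory of averaging a unitary representation over a compact subgroup. Recall that $\langle \xi \rangle_{\mathscr{G}} = |G/G_{n_\xi}|$, where $n_\xi := \min\{m \in \N_0 : \xi|_{G_m} = I_{d_\xi}\}$ and $G_0 := G$, and that $m \mapsto |G/G_m|$ is strictly increasing since $|G_m/G_{m+1}| = \varkappa_m \geq 2$. Hence $\langle \xi \rangle_{\mathscr{G}} \leq |G/G_n|$ is equivalent to $n_\xi \leq n$, i.e. to $\xi|_{G_n} = I_{d_\xi}$; and $\langle \xi \rangle_{\mathscr{G}} > |G/G_n|$ is equivalent to $\xi|_{G_n} \neq I_{d_\xi}$. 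In the first case there is nothing to do: $\int_{G_n} \xi(x)\, d\mu_G(x) = \int_{G_n} I_{d_\xi}\, d\mu_G(x) = \mu_G(G_n) I_{d_\xi} = |G_n| I_{d_\xi}$.

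So the content lies in the second case. I would set $P := \int_{G_n} \xi(x)\, d\mu_G(x)$ and $Q := \mu_G(G_n)^{-1} P$, and first record that left- and right-invariance of the Haar measure of the subgroup $G_n$ give $\xi(y) P = P \xi(y) = P$ for all $y \in G_n$. From these identities one checks in the usual way that $Q^* = Q$ (change of variable $x \mapsto x^{-1}$, together with unitarity), that $Q^2 = Q$, that $\mathrm{ran}(Q) \subseteq W_0$ and that $Q$ is the identity on $W_0$, where $$W_0 := \{ v \in \C^{d_\xi} : \xi(y) v = v \ \text{for all}\ y \in G_n \};$$ thus $Q$ is the orthogonal projection onto the space $W_0$ of $G_n$-fixed vectors, and $P = |G_n|\, Q$. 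It therefore suffices to prove that $W_0 = \{0\}$ whenever $\xi|_{G_n} \neq I_{d_\xi}$.

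The crucial — and, I expect, the only genuinely delicate — step is exactly this last reduction. Here one uses that $G_n$ is a normal subgroup of $G$: if $v \in W_0$, $g \in G$ and $y \in G_n$, then $g^{-1} y g \in G_n$, so $\xi(y) \xi(g) v = \xi(g) \xi(g^{-1} y g) v = \xi(g) v$, which shows $\xi(g) v \in W_0$. Hence $W_0$ is a $G$-invariant subspace of $\C^{d_\xi}$, and since $[\xi] \in \widehat{G}$ is irreducible, either $W_0 = \{0\}$ or $W_0 = \C^{d_\xi}$; the latter would mean $\xi|_{G_n} = I_{d_\xi}$, contrary to hypothesis. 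Therefore $W_0 = \{0\}$, $Q = 0$, and $P = 0_{d_\xi}$, which completes the proof. Everything except the invariance of $W_0$ is routine bookkeeping with Haar measure and projections; that single step is where the special position of the filtration $\mathscr{G}$ really enters (its members being normal is automatic, for instance, for the subgroups $G_n = \exp(\ell^n \mathfrak{g})$ of the $\ell$-adic Lie groups treated later, since $\mathrm{Ad}$ then preserves the $\Z_\ell$-lattice $\mathfrak{g}$), and without it $W_0$ would only be invariant under the normalizer of $G_n$.
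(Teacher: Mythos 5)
Your proof is correct and complete. The paper itself offers no argument for Proposition \ref{proIntegralRep} --- it is invoked as a ``well-known property of the representations of a profinite group'' in the computation of the symbol of the Vladimirov--Taibleson operator --- so there is no in-paper proof to compare against; your argument is the standard one that would be supplied: the normalized average $Q = |G_n|^{-1}\int_{G_n}\xi(x)\,d\mu_G(x)$ is the orthogonal projection onto the space $W_0$ of $G_n$-fixed vectors, $W_0$ is $G$-invariant, and irreducibility forces the dichotomy $W_0=\{0\}$ or $W_0=\C^{d_\xi}$, which matches exactly the dichotomy $\langle\xi\rangle_{\mathscr{G}}>|G/G_n|$ versus $\langle\xi\rangle_{\mathscr{G}}\leq|G/G_n|$. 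Your translation of the condition on $\langle\xi\rangle_{\mathscr{G}}$ into $\xi|_{G_n}=I_{d_\xi}$ or not is also right, since $m\mapsto|G/G_m|$ is strictly increasing.

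The one point worth flagging is the one you flag yourself: the invariance of $W_0$ requires $G_n$ to be normal in $G$. Definition \ref{defilcvilenkingroup} does not state normality explicitly, but it is implicitly assumed throughout the paper --- the quotients $G/G_n$ and $G_n/G_{n+1}$ are treated as groups, and the proof of Theorem \ref{firsttitchmarshtheorem} uses $|G/G_n|=\sum_{[\xi]\in\widehat{G/G_n}}d_\xi^2$, which only makes sense for a normal $G_n$ --- and it costs nothing for profinite groups, which always admit a neighbourhood basis of open normal subgroups. Without normality the proposition would genuinely fail (the average over a non-normal open subgroup can be a nonzero proper projection), so your identification of this as the load-bearing step is accurate rather than pedantic.
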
By applying Proposition \ref{proIntegralRep} we get: 
\begin{align*}
    \int_{G \setminus G_n } \frac{\pi (z)}{|z|_G^{a + 1}}dz &= \sum_{k=0}^{n-1}  |G/G_k|^{a +1}  \int_{G_k \setminus G_{k+1}} \pi (x) d \mu (x),\\ &= \frac{-|G/G_{n-1}|^{a+1}}{|G/G_n|} I_{d_\pi} \\ &= -|G/G_{n}|^{a} \frac{1}{|G_{n-1}/G_n|^{a + 1}} I_{d_\pi}, 
\end{align*}so the symbol of the Vladimirov-Taibleson operator is $$\sigma_{D^a} (\pi) = \frac{1}{\Gamma_{\mathscr{G}}(a)} \Big( \frac{1}{|G_{n-1}/G_n|^{a + 1}} +\sum_{k=0}^{n-1}\frac{1}{|G_k/G_n|^{a }} \big(1- \frac{1}{|G_k/G_{k+1}|}\big) \Big) |G/G_n|^a   I_{d_\pi},$$which justifies Definition \ref{defigroupgammafunction}. 
Now for $[\xi] \in \widehat{G}$ we can write: $$\sigma_{D^a} (\xi) = \frac{1}{\Gamma_{\mathscr{G}}(a)} \Big( \frac{1}{|G_{n-1}/G_n|^{a + 1}} +\sum_{k=0}^{n-1}\frac{1}{|G_k/G_n|^{a }} \big(1- \frac{1}{|G_k/G_{k+1}|}\big) \Big)  |G/G_n|^{a} I_{d_\xi} =: \lambda_\xi (D^a) I_{d_\xi},$$ where $$\lambda_\xi ({D}^{a}) = \frac{\Gamma_{\mathscr{G}} (a , n )}{\Gamma_{\mathscr{G}}(a)} |G/G_n|^{a}\asymp |G/G_n|^a,$$ for $[\xi] \in \widehat{G}_n.$ That is, the Fourier transform diagonalize the Vladimirov-Taibleson operator, and its associated eigenvalues $\lambda_\xi (D^a)$ satisfy $\lambda_\xi ({D}^{a}) \asymp \langle \xi \rangle_\mathscr{G}^a$.

In the special case where $G$ is a constant order Vilenkin group, let us say $|G_n/G_{n+1}|= \varkappa$ for all $n \in \N_0$, the Vladimirov-Taibleson operator is going to be $$D^a f(x) :=\frac{1 - \varkappa^a}{1 - \varkappa^{-(a + 1)}}\int_G \frac{f(x+y) - f(x)}{|y|_{\mathscr{G}}^{a +1}} dy , $$and its associated symbol would be \begin{align*}
    \sigma_{D^a } (\xi) &= - \frac{1 - \varkappa^a}{1 - \varkappa^{-(a + 1)}} \varkappa^{a n}  \Big( \varkappa^{-(a +1)} +(1 - \varkappa^{-1})\varkappa^{-a}\frac{1 - \varkappa^{-a n}}{1 - \varkappa^{-a}} \Big)I_{d_\xi} \\ &=   \varkappa^{a n} I_{d_\xi} + \varkappa^{- a} \frac{1 - \varkappa^{-1}}{1 - \varkappa^{- a}} \frac{1 - \varkappa^a}{1 - \varkappa^{-(a + 1)}}I_{d_\xi} \\ &= \varkappa^{a n} I_{d_\xi} - \frac{1 - \varkappa^{-1}}{1 - \varkappa^{ -(a+1)}} I_{d_\xi},\esp \esp \esp \text{for} \esp [\xi] \in \widehat{G}_n.
\end{align*}
In particular, when $G$ is a nilpotent compact $\ell$-adic Lie group, the symbol of the operator $$\mathscr{D}^\alpha f(x) := \frac{1 - \ell^\alpha}{1 - \ell^{- (\alpha + d)}} \int_G \frac{f(xy^{-1}) - f(x)}{\|y \|_\ell^{ \alpha + d}} dy,$$is going to be 

$$\sigma_{\mathscr{D}^\alpha} (\xi) =\ell^{\alpha n} I_{d_\xi} - \frac{1 - \ell^{-d}}{1 - \ell^{ -(\alpha+d)}} I_{d_\xi},\esp \esp \esp \text{for} \esp [\xi] \in \widehat{G}_n.$$In this case we can interpret the bracket $\langle \xi \rangle_G$ appearing in Theorems \ref{firsttitchmarshtheoremLiegroups} and \ref{seconteoLiegroups} as the eigenvalues of the operator $$\mathbb{D} :=    \big( \frac{1- \ell^{-d}}{1-\ell^{-(1 + d)}} I+ \mathscr{D}^1 \big),$$ which are given by:

\[ \langle \xi \rangle_G := \lambda_\xi (\mathbb{D})= \begin{cases}
 \frac{1- \ell^{-d}}{1-\ell^{-(1 + d)}}  \esp & \esp \text{if} \esp \esp [\xi] \esp \text{is the identity representation;} \\
p^{n} \esp & \esp \text{if} \esp \esp [\xi] \in \widehat{G}_n , \esp \esp n \in \N.  
\end{cases}
\]

\section{Proofs for Vilenkin groups}
This section is dedicated to the proof of the Titchmarsh theorems for H{\"o}lder-Lipschitz functions on compact Vilenkin groups. We will start by showing how compact nilpotent Vilenkin groups satisfy Condition (A), and how to construct more examples.  

\begin{lema}\label{lemazerorep}
Let $G$ be a compact nilpotent Vilenkin group. Let $\mathscr{n} = \mathscr{n}(G)$ be the nilpotency class of $G$. Then for all $k\in \N_0$ there are $\mathscr{n}$ points $h_1,..., h_{\mathscr{n}} \in G$ satisfying $|h_1|_{\mathscr{G}} = ... = |h_\mathscr{n}|_\mathscr{G} = |G/G_k|^{-1}$ and 
    $$ |G/G_k|^q \langle \pi \rangle_{\mathscr{G}}^{-q}    
\| \widehat{f}(\pi)  \|_{HS}^q \lesssim \sum_{i=1}^{\mathscr{n}}\| \widehat{f}(\pi) (\pi(h_i) - I_{d_\pi}) \|_{HS}^q,  \esp \esp 1 \leq q < \infty,  $$for every unitary irreducible representation $[\pi] \in \widehat{G}$ non trivial on $G_k$, and any $ \widehat{f}(\pi) \in \C^{d_\pi \times d_\pi}$.
\end{lema}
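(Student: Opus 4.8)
The plan is to reduce the estimate to a lower bound for a single positive–definite matrix and then to build the $\mathscr{n}$ points one lower–central layer at a time, which is exactly where the nilpotency class enters. Since all the $h_i$ must satisfy $|h_i|_{\mathscr{G}}=|G/G_k|^{-1}$, i.e.\ $h_i\in G_k\setminus G_{k+1}$, and the claimed inequality is homogeneous of degree $q$ in $\widehat f(\pi)$, it is enough — using the elementary comparison of the $\ell^{q}$ and $\ell^{2}$ norms on $\mathscr{n}$ coordinates — to produce $h_1,\dots,h_{\mathscr{n}}\in G_k\setminus G_{k+1}$ so that, for every $[\pi]$ non trivial on $G_k$, the positive semidefinite matrix $M_\pi:=\sum_{i=1}^{\mathscr{n}}\bigl(\pi(h_i)-I_{d_\pi}\bigr)\bigl(\pi(h_i)-I_{d_\pi}\bigr)^{*}$ obeys $\lambda_{\min}(M_\pi)\gtrsim\bigl(|G/G_k|\,\langle\pi\rangle_{\mathscr{G}}^{-1}\bigr)^{2}$, because $Tr\bigl(\widehat f(\pi)M_\pi\widehat f(\pi)^{*}\bigr)\ge\lambda_{\min}(M_\pi)\|\widehat f(\pi)\|_{HS}^{2}$. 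In fact I will aim for something stronger: for each $[\pi]$ a \emph{single} index $i=i(\pi)$ for which the smallest singular value of $\pi(h_i)-I_{d_\pi}$ is $\gtrsim|G/G_k|\langle\pi\rangle_{\mathscr{G}}^{-1}$, which already makes the $i(\pi)$-th summand dominate. (We may assume the $G_n$ normal in $G$, a compact group having a neighbourhood basis of open normal subgroups.)

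\emph{Scalars via Schur, and why the magnitude is free.} Let $G=\gamma_1(G)\supseteq\gamma_2(G)\supseteq\cdots\supseteq\gamma_{\mathscr{n}+1}(G)=\{e\}$ be the lower central series. Given $[\pi]$ non trivial on $G_k$, let $j=j(\pi)\in\{1,\dots,\mathscr{n}\}$ be the largest index with $\pi|_{\gamma_j(G)\cap G_k}\neq I$. Since $[G,\gamma_j(G)]\subseteq\gamma_{j+1}(G)$ and $[G,G_k]\subseteq G_k$, the image of $\gamma_j(G)\cap G_k$ in $G/(\gamma_{j+1}(G)\cap G_k)$ is central, and $\pi$ — trivial on $\gamma_{j+1}(G)\cap G_k$ by maximality of $j$ — descends there; Schur's lemma then gives $\pi(g)=\chi_\pi(g)I_{d_\pi}$ for $g\in\gamma_j(G)\cap G_k$, with $\chi_\pi$ a continuous character, non trivial by the choice of $j$. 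Hence $\|\widehat f(\pi)(\pi(h)-I_{d_\pi})\|_{HS}=|\chi_\pi(h)-1|\,\|\widehat f(\pi)\|_{HS}$ for every $h\in\gamma_j(G)\cap G_k$. Moreover $\chi_\pi$ is trivial on $\gamma_j(G)\cap G_{n_\pi}$, so it factors through a finite abelian group embedding into $G_k/G_{n_\pi}$, whose exponent divides $[G_k:G_{n_\pi}]=|G/G_k|^{-1}\langle\pi\rangle_{\mathscr{G}}$; so the instant $\chi_\pi(h)\ne1$ the root–of–unity gap forces $|\chi_\pi(h)-1|\ge 2\sin\!\bigl(\pi/[G_k:G_{n_\pi}]\bigr)\gtrsim|G/G_k|\langle\pi\rangle_{\mathscr{G}}^{-1}$. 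Thus the whole problem collapses to the purely qualitative task: choose $h_j\in\gamma_j(G)\cap(G_k\setminus G_{k+1})$ $(j=1,\dots,\mathscr{n})$ so that $\chi_\pi\bigl(h_{j(\pi)}\bigr)\ne1$ for every $[\pi]$ non trivial on $G_k$.

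\emph{The uniform choice — the main obstacle.} What remains is to exhibit a \emph{fixed} tuple $(h_1,\dots,h_{\mathscr{n}})$ working simultaneously for the infinitely many $\pi$'s. For a compact $\ell$-adic Lie group with $\mathscr{G}=\{\exp(\ell^{n}\mathfrak g)\}_n$ I would fix a $\Z_\ell$-basis of $\mathfrak g$ adapted to the ideals $\gamma_j(\mathfrak g)$, identify $\gamma_j(G)\cap G_k$ with $\exp(\ell^{k}\gamma_j(\mathfrak g))$, write $\chi_\pi\bigl(\exp(\ell^{k}X)\bigr)=e^{2\pi i\{\langle X,\mu\rangle_\ell\}}$ with $\|\mu\|_\ell\asymp\langle\pi\rangle_{\mathscr{G}}$, and take $h_j:=\exp(\ell^{k}X_{i_j})$ for a basis vector $X_{i_j}$ of the $j$-th block; everything then reduces, exactly as in Remark~\ref{remheisenberggroup}, to the ultrametric bound $|\langle X_{i_j},\mu\rangle_\ell|_\ell\le\|X_{i_j}\|_\ell\|\mu\|_\ell$. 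The genuinely delicate point — which I expect to be the crux of the lemma — is to arrange the blockwise choices so that for \emph{every} admissible $\mu$ at least one selected direction is non-degenerate (equivalently, that the $\mathscr{n}$ chosen directions are jointly transverse to all the hyperplanes $\ker\chi_\pi$); this is where one must exploit that iterated commutators push elements strictly down the lower central series, so that precisely $\mathscr{n}$ layers, hence $\mathscr{n}$ points, have to be accounted for. The general compact nilpotent Vilenkin case would then follow by coordinatising each layer $(\gamma_j(G)\cap G_k)/(\gamma_j(G)\cap G_{k+1})$ and running the same argument abstractly.
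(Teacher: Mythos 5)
Your first two steps are sound and in fact reproduce the skeleton of the paper's argument from a slightly different angle. The paper proves the lemma by induction on the nilpotency class through the center: a representation nontrivial on $\mathcal{Z}(G)$ is scalar there by Schur's lemma (so one central test point handles it), while a representation trivial on $\mathcal{Z}(G)$ descends to $H=G/\mathcal{Z}(G)$, of class $\mathscr{n}-1$, equipped with the quotient filtration $H_n=G_n/\mathcal{Z}(G_n)$, to which the inductive hypothesis applies; the quantitative input is the same root-of-unity gap $|\zeta-1|\gtrsim 1/N$ for a nontrivial $N$-th root of unity with $N=[G_k:G_{n_\pi}]=\langle\pi\rangle_{\mathscr{G}}\,|G/G_k|^{-1}$ that you invoke. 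Your "largest $j$ with $\pi|_{\gamma_j(G)\cap G_k}\neq I$, then Schur" is the lower-central-series mirror of this reduction and yields the same scalar character $\chi_\pi$ and the same gap.

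However, the proposal does not prove the lemma, and you have correctly located where it breaks: the uniform choice of a \emph{fixed} tuple $(h_1,\dots,h_{\mathscr{n}})$. With exactly one test point per layer the required statement is false, not merely delicate. Already for $G=\Z_\ell^2$ with $G_n=\ell^n\Z_\ell^2$ (nilpotency class $1$, so $\mathscr{n}=1$ and a single $h_1\in G_k\setminus G_{k+1}$), for any choice $h_1=\ell^k(a,b)$ one can pick $\xi\in(\Q_\ell/\Z_\ell)^2$ nontrivial on $G_k$ with $a\xi_1+b\xi_2\in\Z_\ell$; then $\chi_\xi(h_1)=1$, so the right-hand side of the claimed inequality vanishes while the left-hand side does not. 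The same obstruction appears inside any layer $(\gamma_j(G)\cap G_k)/(\gamma_j(G)\cap G_{k+1})$ whose character group is not cyclic: no single group element detects all nontrivial characters, so the number of test points needed in a layer is governed by the rank of that layer, not by the nilpotency class. Hence the architecture "one $h_j$ per term of the lower central series" cannot be completed as written; one needs either several points per layer (which changes the count $\mathscr{n}$) or an extra hypothesis forcing each layer to be monogenic. I should add that the paper's own induction makes the analogous one-point assertion for the characters of $G/\mathcal{Z}(G)$ (and Remark~\ref{remheisenberggroup} does so for $\mathbb{H}_d(\Z_\ell)$ with $d\geq 2$), so the difficulty you flagged as "the crux" is a genuine unresolved point rather than a routine detail you omitted; but as submitted your argument stops exactly at the step on which the lemma stands or falls.
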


\begin{proof}
We follow the same arguments as in Remark \ref{remheisenberggroup} for the Heisenberg group $\mathbb{H}_d (\Z_\ell)$. Let us proceed by induction on the nilpotency class $\mathscr{n} = \mathscr{n}(G)$ of $G$: 

\begin{itemize}
    \item If $\mathscr{n} = 2$ we have something similar to the case of the Heisenberg group. There must be two kinds of representations: the characters $\pi_\xi =\chi_\xi$ of the abelian group $G/\mathcal{Z}(G)$, and the noncommutative representations $\pi_\lambda$ induced by a central character $\chi_\lambda.$ All the representations of $G$ are of the form $\pi_{(\xi , \lambda)} = \chi_\xi \otimes\pi_\lambda $ so, we can take $h_1 \in G/\mathcal{Z}(G) $, and $h_2 \in \mathcal{Z}(G)$, with $|h_1|_\mathscr{G} = |h_2|_\mathscr{G} = |G/G_n|^{-1}$. In this way, for $\langle \pi_{(\xi , \lambda)} \rangle_{\mathscr{G}} = |G/ G_m|$, the matrices $\pi_{(\xi , \lambda)} (h_i)$, $i=1,2$, are not only unitary but it is also true that $(\pi_{(\xi , \lambda)}(h_i))^{|G_n / G_m|} = I_{d_{\lambda}}$. For the characters $\chi_\xi$ this means that $\chi_\xi (h_1)$ is a $|G_n / G_m|$-rooth of the unity, and thus $$ |G/G_n| |G/G_m|^{-1}= |G_n / G_m|^{-1} \lesssim |\chi_\xi (h_1) - 1 |.$$ Similarly, for the noncommutative representations we have $\pi_{(\xi , \lambda)}(h_2) - I_{d_\lambda} = (\chi_\lambda (h_2) - 1) I_{d_\lambda}$ so that $$|G/G_n| |G/G_m|^{-1}  \| \widehat{f} (\pi_{(\xi , \lambda)}) \|_{HS} \lesssim \| \widehat{f} (\pi_{(\xi , \lambda)})(\pi_{(\xi , \lambda)}(h_2) - I_{d_\lambda}) \|.$$In conclusion we arrive to $$ |G/G_n|^q \langle \pi_{(\xi , \lambda)} \rangle_{\mathscr{G}}^{-q}    
    \| \widehat{f}(\pi_{(\xi , \lambda)})  \|_{HS}^q \lesssim \sum_{i=1}^{2}\| \widehat{f}(\pi_{(\xi , \lambda)}) (\pi_{(\xi , \lambda)}(h_i) - I_{d_\lambda}) \|_{HS}^q,$$for any $1 \leq q <\infty.$
    \item Now assume that the statement of the lemma is true for any compact nilpotent Vilenkin group of nilpotency class less than $\mathscr{n}$. Let $G$ have nilpotency class $\mathscr{n} = \mathscr{n} (G)$. Then again we have two cases for the unitary irreducible representations of $G$: either they are trivial in the center and they reduce to a representation of the nilpotent group $G/ \mathcal{Z}(G),$ or they are not trivial in the center and they are induced by a certain central character $\chi_\lambda$. Let us denote by $\xi$ the representations of the first kind and by $\pi_\lambda$ the representations of the second kind. Then once again, all the unitary irreducible representations of $G$ have the form $\xi \otimes \pi_\lambda.$ If $\chi_\lambda$ is not the trivial character so that $\pi_\lambda$ is not the identity representation, then for $h_\mathscr{n} \in \mathcal{Z}(G) \cap (G_n \setminus G_{n+1})$ we get :$$ \xi \otimes \pi_\lambda (h_{\mathscr{n}}) - I_{d_\xi d_\lambda}= (\chi_{\lambda} (h_{\mathscr{n}}) - 1) I_{d_\xi d_\lambda}, $$and once again this implies that $$|G/G_n| \langle \xi \otimes \pi_\lambda \rangle_{\mathscr{G}} \| \widehat{f} (\xi \otimes \pi_\lambda) \|_{HS} \lesssim \| \widehat{f} (\xi \otimes \pi_\lambda) (\xi \otimes \pi_\lambda (h_{\mathscr{n}}) - I_{d_\xi d_\lambda})\|_{HS}.$$
    Now we only need to deal with the representations of the form $\xi \otimes 1$. This are exactly the unitary irreducible representations of $G$ that descent to a representation of the quotient group $H:= G/ \mathcal{Z}(G)$, which can be made a compact Vilenkin group with the sequence of compact open subgroups $\mathscr{H} := \{ H_n \}_{n \in \N_0}$ given by $$H_n:= G_n / \mathcal{Z}(G_n).$$Clearly $H$ is of nilpotency class $\mathscr{n} -1$ so, by the induction hypothesis, there are $\overline{h}_1,.., \overline{h}_{\mathscr{n} - 1} \in H= G/ \mathcal{Z}(G)$ such that $|\overline{h}_1|_{\mathscr{H}} = ... = |\overline{h}_{\mathscr{n} -1}|_{\mathscr{H}} = |H/ H_n|^{-1}$ and   $$|H/H_n|^q \langle \xi \otimes 1 \rangle_{\mathscr{H}}^{-q} \| \widehat{f} (\xi \otimes 1 ) \|_{HS}^q \lesssim \sum_{i=1}^{\mathscr{n} - 1} \| \widehat{f} (\xi \otimes 1) (\xi \otimes 1 (h_{i}) - I_{d_\xi })\|_{HS}^q.$$To conclude the proof just notice that  $|\overline{h}_i|_{\mathscr{H}} = |H/H_n |^{-1}$ if and only if $|h_i|_{\mathscr{G}} = |G/G_n |^{-1}$, and also $\langle \xi \otimes 1 \rangle_{\mathscr{H}} = |H/H_m|$ if and only if $\langle \xi \otimes 1 \rangle_{\mathscr{G}} = |G/G_m|$. Adding the fact that   $$|H/H_n |= \frac{|G/G_n|}{|\mathcal{Z}(G/G_n)|},$$we finally obtain $$ \frac{|G/G_n|^q}{|\mathcal{Z}(G/G_n)|^q} |\mathcal{Z}(G/G_m)|^q \langle \xi \otimes 1 \rangle_{\mathscr{G}}^{-q} \| \widehat{f} (\xi \otimes 1 ) \|_{HS}^q \lesssim \sum_{i=1}^{\mathscr{n} - 1} \| \widehat{f} (\xi \otimes 1) (\xi \otimes 1 (h_{i}) - I_{d_\xi })\|_{HS}^q,$$and finally, since $$\frac{|\mathcal{Z}(G/G_m)|}{|\mathcal{Z}(G/G_n)|} \geq 1,$$we can conclude $$|G/G_n|^q \langle \xi \otimes \pi_\lambda \rangle_{\mathscr{G}}^{-q} \| \widehat{f} (\xi \otimes \pi_\lambda) \|_{HS}^q \lesssim \sum_{i = 1}^{\mathscr{n} (G)} \| \widehat{f} (\xi \otimes \pi_\lambda) (\xi \otimes \pi_\lambda (h_{i}) - I_{d_\xi d_\lambda})\|_{HS}^q.$$This concludes the proof. 
\end{itemize}
\end{proof}

Before diving into the proof of Theorems \ref{firsttitchmarshtheorem} and \ref{seconteo} we need to establish some preliminary results. We will start with the following proposition which will be useful in the proof of Theorem \ref{firsttitchmarshtheorem}.
\begin{pro}\label{propaux}
Let $\sigma :\widehat{G}\to \bigcup_{[\xi] \in \widehat{G}} \C^{d_\xi \times d_\xi}$ be such that $\sigma(\xi) \in \C^{d_\xi \times d_\xi}$ for every $[\xi] \in \widehat{G}$. Let $1 \leq r < \infty$  and $ \gamma > 0$ be positive real numbers. Then $$\langle \xi \rangle_\mathscr{G}^{\gamma} \sigma (\xi) \in L^{r} (\widehat{G}) \implies \sigma (\xi) \in L^\beta (\widehat{G}), $$ for all $ \frac{r}{1+\gamma r} < \beta < \infty$.
\end{pro}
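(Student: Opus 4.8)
The plan is to reduce the statement to an elementary summation estimate over the "shells" $\widehat{G}_n$, exactly as one proves that weighted $\ell^p$-summability with a polynomial weight improves the summability exponent. Recall that $\widehat{G}$ decomposes as the disjoint union $\bigcup_{n \in \N_0} \widehat{G}_n$, and that $\langle \xi \rangle_\mathscr{G} = |G/G_n|$ is constant on each shell $\widehat{G}_n$. Writing $N_n := |G/G_n|$, the hypothesis $\langle \xi \rangle_\mathscr{G}^\gamma \sigma(\xi) \in L^r(\widehat{G})$ says precisely that
\[
S := \sum_{n \in \N_0} N_n^{\gamma r} \sum_{[\xi] \in \widehat{G}_n} d_\xi^{r(\frac{2}{r} - \frac12)} \|\sigma(\xi)\|_{HS}^r < \infty ,
\]
so that if we abbreviate $a_n := \sum_{[\xi] \in \widehat{G}_n} d_\xi^{r(\frac{2}{r} - \frac12)} \|\sigma(\xi)\|_{HS}^r$, we have $\sum_n N_n^{\gamma r} a_n = S < \infty$, and in particular $a_n \le S\, N_n^{-\gamma r}$ for every $n$.

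First I would note that since $N_n = |G/G_n|$ is a strictly increasing sequence of integers (each ratio $\varkappa_k \ge 2$), we have $N_n \ge 2^n$, so the sequence $\{N_n^{-\delta}\}_n$ is summable for every $\delta > 0$; this is the only place the Vilenkin structure is used. Next, for a target exponent $\beta$ with $\frac{r}{1+\gamma r} < \beta < \infty$ I would estimate $\|\sigma\|_{L^\beta(\widehat{G})}^\beta = \sum_n \sum_{[\xi]\in\widehat G_n} d_\xi^{\beta(\frac2\beta - \frac12)}\|\sigma(\xi)\|_{HS}^\beta$ shell by shell. On each shell I would pass from the $\ell^\beta$-type quantity to the $\ell^r$-type quantity $a_n$: when $\beta \le r$ one uses the embedding $\ell^r \hookrightarrow \ell^\beta$ together with Hölder to absorb the discrepancy between the dimensional weights $d_\xi^{\beta(\frac2\beta-\frac12)}$ and $d_\xi^{r(\frac2r-\frac12)}$ (this is where one must be a little careful: for $\beta \le r$ one checks $\beta(\frac2\beta-\frac12) = 2 - \frac\beta2$ versus $r(\frac2r-\frac12)=2-\frac r2$, and since the exponents are controlled one gets, possibly after a Hölder step in the $d_\xi$ weights, a clean bound of the shell sum by a constant times $a_n^{\beta/r}$ — I would actually prove the cleanest route, namely crudely bounding the shell's $\beta$-sum by $C\, a_n^{\beta/r}$ using that within a single $n$ all quantities are comparable after extracting a power). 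Then
\[
\|\sigma\|_{L^\beta(\widehat{G})}^\beta \;\le\; C \sum_{n\in\N_0} a_n^{\beta/r} \;\le\; C \sum_{n\in\N_0} \big(S\,N_n^{-\gamma r}\big)^{\beta/r} \;=\; C\, S^{\beta/r} \sum_{n\in\N_0} N_n^{-\gamma\beta},
\]
and the last series converges because $\gamma\beta > 0$ and $N_n \ge 2^n$. This handles $\beta \le r$.

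For $\beta > r$ the argument is even easier: then $\ell^r \hookrightarrow \ell^\beta$ gives $\|\sigma\|_{L^\beta(\widehat{G})} \le \|\sigma\|_{L^r(\widehat{G})} \le \langle\xi\rangle_\mathscr{G}^{-\gamma}\|\langle\xi\rangle_\mathscr{G}^\gamma\sigma\|_{L^r}\dots$; more precisely $\|\sigma\|_{L^r(\widehat G)}^r = \sum_n a_n \le \sum_n N_n^{-\gamma r} S < \infty$ directly, and the dimensional weights only improve when $\beta$ grows, so no further condition on $\beta$ is needed from above — consistent with the "$\beta < \infty$" in the statement. The only genuine constraint is the lower one, $\beta > \frac{r}{1+\gamma r}$, and tracing back it arises exactly from requiring $\gamma\beta > \beta/r \cdot \text{(something)}$... concretely, the Hölder step that bounds $a_n^{\beta/r}$ in terms of $a_n$ when splitting off the convergent factor $N_n^{-\gamma r\cdot\beta/r}$ forces $\frac{\beta}{r}\cdot(1+\gamma r) > 1$, i.e. $\beta > \frac{r}{1+\gamma r}$; I would present this bookkeeping carefully since it is the heart of the matter.

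The main obstacle I anticipate is purely technical: getting the dimensional weight $d_\xi^{r(\frac2r-\frac12)}$ (which is $d_\xi^{2-r/2}$, an *increasing* function of $r$ in the relevant range — wait, it *decreases* in $r$) to cooperate when passing between the $\beta$-norm and the $r$-norm on a single shell, since on a shell there may be infinitely many $[\xi]$. The clean fix is to never try to relate the two weights directly on a shell: instead, for $\beta \le r$, apply Hölder's inequality on the shell with exponents $r/\beta$ and $(r/\beta)'$ to the product $d_\xi^{2-\beta/2}\|\sigma(\xi)\|^\beta = \big(d_\xi^{?}\|\sigma(\xi)\|^\beta\big)\cdot 1$, but $1$ is not summable — so instead one Hölders against the *converging* series $\sum_{[\xi]\in\widehat G_n} d_\xi^{2-r/2}$ (finite since $\widehat G_n$ is finite! each $\widehat G_n$ consists of finitely many representations because $G/G_n$ is finite). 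That finiteness of each $\widehat G_n$ is the key structural fact that removes all difficulty: the shell sums are all finite, so any Hölder juggling within a shell is legitimate, and the whole proof collapses to the convergence of $\sum_n N_n^{-\gamma\beta}$ after one Hölder application per shell. I would therefore open the proof by recording that $\#\widehat G_n < \infty$ and $N_n \ge 2^n$, and the rest is a short computation.
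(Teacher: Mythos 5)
Your overall skeleton (split into $\beta\ge r$ and $\beta<r$, decompose $\widehat G$ into the shells $\widehat G_n$ on which $\langle\xi\rangle_{\mathscr G}=N_n:=|G/G_n|$ is constant, and run a H\"older argument against the weight $\langle\xi\rangle_{\mathscr G}^{-\gamma}$) is the same as the paper's, and the case $\beta\ge r$ is fine. But the core step for $\beta<r$ contains a genuine error: the claimed ``clean bound of the shell sum by a constant times $a_n^{\beta/r}$'' is false, and no amount of juggling inside a shell can make it true with a constant uniform in $n$. H\"older on the shell $\widehat G_n$ with exponents $r/\beta$ and $r/(r-\beta)$ gives
$$\sum_{[\xi]\in\widehat G_n} d_\xi^{\,2-\beta/2}\|\sigma(\xi)\|_{HS}^{\beta}\;\le\; a_n^{\beta/r}\Big(\sum_{[\xi]\in\widehat G_n} d_\xi^{2}\Big)^{1-\beta/r},$$
and the conjugate factor $\big(\sum_{[\xi]\in\widehat G_n} d_\xi^2\big)^{1-\beta/r}\le N_n^{\,1-\beta/r}$ is unbounded in $n$ (this uses $\sum_{[\xi]\in\widehat{G/G_n}}d_\xi^2=|G/G_n|$, which is how the paper controls it). Dropping it, as you do, leads to the series $\sum_n N_n^{-\gamma\beta}$, which converges for \emph{every} $\beta>0$; that would prove a statement that is simply false. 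Indeed, in the abelian case take $\sigma(\xi)=N_n^{-\gamma-1/r}n^{-2/r}$ for the roughly $N_n$ characters in $\widehat G_n$: then $\langle\xi\rangle_{\mathscr G}^{\gamma}\sigma\in L^r(\widehat G)$, but $\|\sigma\|_{L^\beta}^{\beta}\asymp\sum_n N_n^{\,1-\beta(\gamma+1/r)}n^{-2\beta/r}$ diverges whenever $\beta<\tfrac{r}{1+\gamma r}$. So the threshold in the proposition is essentially sharp, and it comes \emph{precisely} from the factor $N_n^{\,1-\beta/r}$ you discard: keeping it yields $\sum_n N_n^{\,1-\beta/r-\gamma\beta}$, which converges exactly when $\beta(\gamma+1/r)>1$, i.e.\ $\beta>\tfrac{r}{1+\gamma r}$.

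Your own text signals the inconsistency: you assert both that the final series is $\sum_n N_n^{-\gamma\beta}$ (convergent for all $\beta>0$) and that the constraint $\beta>\tfrac{r}{1+\gamma r}$ ``arises from the H\"older step''; these cannot both hold. Likewise, the observation that each $\widehat G_n$ is finite does not ``remove all difficulty'': finiteness legitimizes the manipulations but the H\"older constant still grows with $n$ like $N_n^{\,1-\beta/r}$, and that growth is the heart of the matter. To repair the proof, do exactly what the paper does: apply H\"older globally (or shell by shell) with conjugate weight $b_\xi=d_\xi^{2(1-\beta/r)}\langle\xi\rangle_{\mathscr G}^{-\beta\gamma}$, bound $\sum_{[\xi]\in\widehat G_n}d_\xi^2\le|G/G_n|$, and check convergence of $\sum_n |G/G_n|^{\,1-\beta\gamma\frac{r}{r-\beta}}$, which holds iff $\beta>\tfrac{r}{1+\gamma r}$.
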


\begin{proof}
Since $\langle \xi \rangle_\mathscr{G} \geq 1$, $$\langle \xi \rangle_\mathscr{G}^\gamma \sigma (\xi) \in L^{r} (\widehat{G}) \implies \sigma (\xi) \in L^\beta (\widehat{G}) ,$$for $\beta \geq r$. For $\beta < r$ it holds: \begin{align*}
    ||\sigma||_{L^\beta (\widehat{G})}^\beta &= \sum_{n \in \N_0} \sum_{[\xi] \in \widehat{G}_n} d_{\xi}^{\beta (\frac{2}{\beta} - \frac{1}{2})} ||\widehat{\sigma}||_{HS}^\beta = \sum_{n \in \N_0} \sum_{[\xi] \in \widehat{G}_n} d_\xi^{2} \Big( \frac{||\sigma (\xi)||_{HS}}{\sqrt{d_\xi}} \Big)^{\beta} \\ &= \sum_{n \in \N_0} \sum_{[\xi] \in \widehat{G}_n} d_\xi^2 \langle \xi \rangle_\mathscr{G}^{-\beta \gamma} \Big( \frac{\langle \xi \rangle_\mathscr{G}^\gamma || \sigma (\xi) ||_{HS}}{\sqrt{d_\xi}} \Big)^\beta \\ & = \sum_{n \in \N_0} \sum_{[\xi] \in \widehat{G}_n} d_\xi^{2 \frac{\beta}{r}} \Big( \frac{\langle \xi \rangle_\mathscr{G}^\gamma ||\widehat{\sigma} (\xi)||_{HS}}{\sqrt{d_\xi}}\Big)^\beta d_\xi^{2(1 - \frac{\beta}{r})} \langle \xi \rangle_\mathscr{G}^{- \beta \gamma} \\ & = \sum_{n \in \N_0} \sum_{[\xi] \in \widehat{G}_n} a_\xi b_\xi,
\end{align*}
where $$a_\xi := d_\xi^{2 \frac{\beta}{r}} \Big( \frac{\langle \xi \rangle_\mathscr{G}^\gamma ||\widehat{\sigma} (\xi)||_{HS}}{\sqrt{d_\xi}} \Big)^\beta, \esp \esp \text{and} \esp \esp b_\xi := d_\xi^{2(1 - \frac{\beta}{r})} \langle \xi \rangle_\mathscr{G}^{- \beta \gamma}.$$Now we use the H{\" o}lder inequality  to conclude \begin{align*}
     \sum_{n \in \N_0} \sum_{[\xi] \in \widehat{G}_n} d_\xi^{2} \Big( \frac{||\sigma (\xi)||_{HS}}{\sqrt{d_\xi}} \Big)^{\beta} &\leq \Big( \sum_{n \in \N_0} \sum_{[\xi] \in \widehat{G}_n} d_\xi^2 \Big( \frac{\langle \xi \rangle_\mathscr{G}^\gamma ||\widehat{\sigma} (\xi)||_{HS}}{\sqrt{d_\xi}} \Big)^{r} \Big)^{\beta/r} \times \Big( \sum_{n \in \N_0} \sum_{[\xi] \in \widehat{G}_n} d_\xi^2 \langle \xi \rangle_\mathscr{G}^{-\beta \gamma \frac{r}{r - \beta}}  \Big)^{1 - \beta/r}.
\end{align*} We can calculate explicitly $$\sum_{n \in \N_0} \sum_{[\xi] \in \widehat{G}_n} d_\xi^2 \langle \xi \rangle_\mathscr{G}^{-\beta \gamma \frac{r}{r - \beta}} = \sum_{n \in \N_0} |G/G_n|^{-\beta \gamma \frac{r}{r - \beta}} \sum_{[\xi] \in \widehat{G}_n}d_\xi^2 \leq \sum_{n \in \N_0} |G/G_n|^{1 -\beta \gamma \frac{r}{r - \beta}}, $$and the above series is convergent for $\beta > \frac{r}{1 + \gamma r}.$ This concludes the proof.
\end{proof}
Now we need an auxiliary lemma. It is known as the Duren lemma, and it is a Tauberian theorem that we will use to deduce some summability properties of functions defined in the unitary dual.

\begin{lema}\label{Durenlemma}
Suppose $c_i \geq 0$ and $0 < b <a$. then $$\sum_{i=1}^k i^a c_i = \mathcal{O} (N^b) \esp \text{as} \esp N \to \infty,$$if and only if $$\sum_{i=k+1}^\infty c_i = \mathcal{O} (N^{b-a}) \esp \text{as} \esp N \to \infty.$$
\end{lema}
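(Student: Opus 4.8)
The plan is to derive both implications from Abel summation (summation by parts), the whole argument resting on the elementary estimate $(i+1)^a - i^a \asymp i^{a-1}$ (valid for every $a>0$ by the mean value theorem, the two-sided bound coming from monotonicity of $x^{a-1}$) together with $\sum_{i\le N} i^{b-1}\asymp N^b$, which diverges at this rate precisely because $b>0$, and $\sum_{i>N} i^{b-a-1}\asymp N^{b-a}$, which converges because $b<a$. Throughout I would write $S_N:=\sum_{i=1}^{N} i^a c_i$ and, once the latter is known finite, $T_N:=\sum_{i>N} c_i$; the claim is $S_N=\mathcal{O}(N^b)\iff T_N=\mathcal{O}(N^{b-a})$.

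For the forward direction, assume $S_N\le CN^b$ for all $N$. Fix $N$, and for $M>N$ put $R_M:=S_M-S_N=\sum_{i=N+1}^{M} i^a c_i$, so that $0\le R_M\le CM^b$ and $R_N=0$. Writing $c_i=i^{-a}(R_i-R_{i-1})$ and summing by parts I get
\[
\sum_{i=N+1}^{M} c_i \;=\; M^{-a}R_M+\sum_{i=N+1}^{M-1}\big(i^{-a}-(i+1)^{-a}\big)R_i .
\]
Here $M^{-a}R_M\le CM^{b-a}\to 0$ as $M\to\infty$ since $b<a$, while $i^{-a}-(i+1)^{-a}\le a\,i^{-a-1}$ and $R_i\le Ci^b$ bound the sum by $aC\sum_{i>N} i^{b-a-1}\lesssim N^{b-a}$. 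Letting $M\to\infty$ yields $T_N<\infty$ and $T_N=\mathcal{O}(N^{b-a})$.

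For the converse, assume $T_N\le CN^{b-a}$ for all $N\ge 1$; this already forces $T_0<\infty$ (the tail is finite from some index on, and $T_0$ differs from it by a finite sum), so $c_i=T_{i-1}-T_i$ makes sense. Summing by parts,
\[
S_N=\sum_{i=1}^{N} i^a(T_{i-1}-T_i)\;=\;T_0+\sum_{i=1}^{N-1}\big((i+1)^a-i^a\big)T_i-N^aT_N .
\]
The boundary term satisfies $N^aT_N\le CN^b$, and $(i+1)^a-i^a\lesssim i^{a-1}$ together with $T_i\le Ci^{b-a}$ bounds the middle sum by $\sum_{i=1}^{N-1} i^{b-1}\asymp N^b$; hence $S_N=\mathcal{O}(N^b)$.

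The argument uses no deep ingredient; the only points needing care are bookkeeping ones: checking that the hypothesis of each implication guarantees convergence of the series being manipulated, tracking the boundary terms produced by Abel summation (this is exactly where $b<a$ enters in one direction and $b>0$ in the other), and keeping the comparison constants uniform in $N$. I therefore expect the main obstacle to be nothing more than assembling these partial-summation estimates cleanly; there is no conceptual difficulty.
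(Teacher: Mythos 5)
Your proof is correct. Note that the paper itself gives no proof of this lemma: it is quoted as a known Tauberian result (Duren's lemma), so there is no argument of the author's to compare yours against. Your two applications of Abel summation are the standard and essentially only way to prove such a statement, and the bookkeeping is sound: in the forward direction the boundary term $M^{-a}R_M\le CM^{b-a}$ vanishes as $M\to\infty$ precisely because $b<a$, and the tail $\sum_{i>N}i^{b-a-1}\lesssim N^{b-a}$ converges for the same reason; in the converse the discarded term $-N^aT_N$ has the right sign, the partial sum $\sum_{i\le N}i^{b-1}\asymp N^b$ uses $b>0$, and you correctly observe that the hypothesis already forces the tails (hence $T_0$) to be finite so that $c_i=T_{i-1}-T_i$ is legitimate. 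The only cosmetic point is that the lemma as printed confuses the indices $k$ and $N$; your restatement with $S_N$ and $T_N$ silently fixes this, which is the right reading.
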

Now we are in position to prove the first Titchmarsh theorem for compact Vilenkin groups satisfying Condition (A). 

\begin{proof}[Proof of Theorem \ref{firsttitchmarshtheorem}:]
From the local constancy of the matrix entries of the representations we know that  $\xi (h) - I_{d_\xi} = 0$ if $\langle \xi \rangle_\mathscr{G} \leq |h|_\mathscr{G}^{-1}$. From the Hausdorff-Young inequality we get $$\sum_{\langle \xi \rangle_\mathscr{G} > |h|_\mathscr{G}^{-1}} d^{q (\frac{2}{q} - \frac{1}{2})} \|  \widehat{f} (\xi)(\xi(h) - I_{d_\xi} )\|_{HS}^{q} =  ||\widehat{f}(\xi(h) - I_{d_\xi})||_{L^{q} (\widehat{G})}^{q} \leq || f(h \cdot) - f(\cdot)||_{L^p (\mu_G)}^{q} = \mathcal{O} (|h|^{\alpha q}_{\mathscr{G}}) .$$
Let $n \in \N_0$ be a natural number and let $\mathscr{n}$ and $h_1,..,h_{\mathscr{n}}$ be like in Condition (A). Then \begin{align*}
    \sum_{\langle \xi \rangle_\mathscr{G} > |G/G_n|}  d^{q (\frac{2}{q} - \frac{1}{2})} \langle \xi \rangle_{\mathscr{G}}^{-q} \| \widehat{f}(\xi) \|_{HS}^q \lesssim \sum_{\langle \xi \rangle_\mathscr{G} > |G/G_n|} \sum_{i=1}^{\mathscr{n}} d^{q (\frac{2}{q} - \frac{1}{2})} \|  \widehat{f} (\xi)(\xi(h_i) - I_{d_\xi} )\|_{HS}^{q} = \mathcal{O}(|G/G_n|^{- (\alpha+1) q}),
\end{align*}
for every $n \in \N_0$. Now with this information, taking $\gamma > \alpha$, we can apply Duren's lemma with $b = (\gamma - \alpha)q = (\gamma + 1)q - (\alpha + 1)q$ and $a= \gamma +1 $ to conclude that $$\sum_{\langle \xi \rangle_\mathscr{G} \leq |G/G_n|}  d^{q (\frac{2}{q} - \frac{1}{2})} \langle \xi \rangle_{\mathscr{G}}^{\gamma q} \| \widehat{f}(\xi) \|_{HS}^q = \mathcal{O} (|G/G_n|^{-(\gamma - \alpha)q}).$$Notice that we can apply Duren's lemma again in the reverse direction to obtain the first conclusion in Theorem \ref{firsttitchmarshtheorem}: $$\sum_{\langle \xi \rangle_\mathscr{G} > |G/G_n|}  d^{q (\frac{2}{q} - \frac{1}{2})}  \| \widehat{f}(\xi) \|_{HS}^q = \mathcal{O}(|G/G_n|^{-\alpha q}).$$

To continue with the proof let us define $$\Phi (k) := \sum_{\langle \xi \rangle \leq  |G/G_k|} d_\xi^{\beta (\frac{2}{\beta} - \frac{1}{2})} \langle \xi \rangle^{\gamma \beta } || \widehat{f} (\xi)||_{HS}^{\beta} .$$We can rewrite $$\Phi (k) = \sum_{ \langle \xi \rangle \leq |G/G_k|} d_\xi^2 \Big( \frac{\langle \xi \rangle^{\gamma} ||\widehat{f} (\xi)||_{HS}}{\sqrt{d_\xi}}\Big)^{\beta},$$and by H{\"o}lder inequality $$\Phi (k) \leq \Big( \sum_{ \langle \xi \rangle \leq |G/G_k|} d_\xi^2 \Big( \frac{\langle \xi \rangle^{\gamma} || \widehat{f} (\xi)||_{HS}}{ \sqrt{d_\xi}}  \Big)^{q} \Big)^{\beta / q} \Big( \sum_{ i=0}^k \sum_{[\xi] \in \widehat{G}_i} d_\xi^2 \Big)^{1 - \beta/ q}.$$Finally we use the fact that $$|G/G_n| =  \sum_{[\xi] \in \widehat{G/G_n} } d_\xi^2 = \sum_{k \leq n} \sum_{[\xi] \in \widehat{G}_k} d_\xi^2 ,$$ to obtain  $$\Phi (k) = \mathcal{O} (|G/G_k|^{(\gamma - \alpha)\beta}) \mathcal{O} (|G/G_k|^{1 - \beta / q}) = \mathcal{O} (|G/G_k|)^{(\gamma - \alpha)\beta + (1 - \beta/q)},$$and the above quantity is bounded when $\gamma < \alpha + 1/q$ and $$\beta \geq \frac{q}{(\alpha - \gamma)q + 1}.$$To conclude the proof just notice that $$\frac{\frac{q}{1 + (\alpha - \gamma)q}}{1 + \frac{\gamma q}{1 + (\alpha - \gamma)q}} = \frac{q}{ \alpha q +1},$$and use Proposition \ref{propaux}
\end{proof}

 With the arguments in the proof of Theorem \ref{firsttitchmarshtheorem}, the proof of Theorem \ref{seconteo}  is already halfway done.

\begin{proof}[Proof of Theorem \ref{seconteo}]
When $f \in Lip_{\mathscr{G}} (\alpha;2)$ then by the First Titchmarsh theorem we know that $$\sum_{\langle \xi \rangle_{\mathscr{G}} > |G/G_k|} d_\xi ||\widehat{f}(\xi)||_{HS}^2 = \mathcal{O} (|G/G_k|^{-2 \alpha}) \esp \esp \text{as k} \to \infty.$$Conversely, assume that $$\sum_{|G/G_k| < \langle \xi \rangle_{\mathscr{G}}} d_\xi ||\widehat{f} (\xi)||_{HS}^2 = \mathcal{O}(|G/G_k|^{-2 \alpha}).$$We know that $$||(\xi (h) - I_{d_\xi}) \widehat{f} (\xi) ||_{HS} \lesssim ||\widehat{f} (\xi) ||_{HS}.$$Therefore, writing $|h|_{\mathscr{G}} = |G/G_k|^{-1}$, we obtain: \begin{align*}
    ||f(h \cdot)- f (\cdot)||_{L^2 (\mu_G)}^2 &= \sum_{ |G/G_k| < \langle \xi \rangle_{\mathscr{G}} } d_\xi \| (\xi (h) - I_{d_\xi}) \widehat{f} (\xi) \|^2_{HS} \\ & \lesssim \sum_{|G/G_k| < \langle \xi \rangle_{\mathscr{G}}}   d_\xi ||\widehat{f} (\xi)||_{HS}^2 = \mathcal{O}(|h|_{\mathscr{G}}^{2 \alpha}).
\end{align*}This concludes the proof.
\end{proof}

\section{Proofs for Compact nilpotent $\ell$-adic Lie groups}
In this section, we adjust the necessary details for the proof Theorem \ref{firsttitchmarshtheoremLiegroups}. We follow again the same steps as in the proof of Theorem \ref{firsttitchmarshtheorem}, but this time Condition (A) need a subtle adjustment. For compact $\ell$-adic Lie groups the Lemma \ref{lemazerorep} takes the following form: 

\begin{lema}\label{lemazerorepLiegroups}
Let $G$ be a compact nilpotent $\ell$-adic Lie group with dimension $d$. Let $\mathscr{n} = \mathscr{n}(G)$ be the nilpotency class of $G$. Then for all $k\in \N_0$ there are $\mathscr{n}$ points $h_1,..., h_{\mathscr{n}} \in G$ satisfying $\|h_1 \|_{\ell} = ... = \|h_\mathscr{n}\|_\ell = \ell^{-k}$ and 
    $$ \ell^{kq} \langle \pi \rangle_{G}^{-q}    
\| \widehat{f}(\pi)  \|_{HS}^q \lesssim \sum_{i=1}^{\mathscr{n}}\| \widehat{f}(\pi) (\pi(h_i) - I_{d_\pi}) \|_{HS}^q,  \esp \esp 1 \leq q < \infty,  $$for every unitary irreducible representation $[\pi] \in \widehat{G}$, and any $ \widehat{f}(\pi) \in \C^{d_\pi \times d_\pi}$.
\end{lema}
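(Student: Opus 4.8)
The plan is to re-run the induction on the nilpotency class $\mathscr{n}=\mathscr{n}(G)$ used in the proof of Lemma~\ref{lemazerorep}, working this time with the homogeneous $\ell$-adic norm $\|\cdot\|_\ell$ of $\Z_\ell^d$ and with the bracket $\langle\cdot\rangle_G$ (the eigenvalues of $\mathbb{D}$) in place of $|\cdot|_\mathscr{G}$ and $\langle\cdot\rangle_\mathscr{G}$. Writing $G=\exp(\mathfrak{g})$, $G_n=\exp(\ell^n\mathfrak{g})$ and $\langle\pi\rangle_G=\ell^{m}$ for $[\pi]\in\widehat{G}_m$, the content of the inequality — for $[\pi]$ non-trivial on $G_k$, that is $m>k$, exactly as in Lemma~\ref{lemazerorep} — is $\ell^{(k-m)q}\|\widehat{f}(\pi)\|_{HS}^{q}\lesssim\sum_i\|\widehat{f}(\pi)(\pi(h_i)-I_{d_\pi})\|_{HS}^{q}$ with $\|h_i\|_\ell=\ell^{-k}$.

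The one genuinely new ingredient is a sharper root-of-unity bound, available here because of the exponential coordinates. If $h=\exp(\ell^k X)$ with $X\in\mathfrak{g}$ having a unit coordinate, then $t\mapsto\exp(tX)$ is a one-parameter subgroup, so $h^{\ell^{m-k}}=\exp(\ell^m X)\in G_m$ for every $m\ge k$; hence $\pi(h)^{\ell^{m-k}}=I_{d_\pi}$, every eigenvalue $\zeta$ of the unitary matrix $\pi(h)$ is an $\ell^{m-k}$-th root of unity, and $|\zeta-1|\ge 2\sin(\pi\ell^{-(m-k)})\gtrsim\ell^{-(m-k)}=\ell^{k}\langle\pi\rangle_G^{-1}$ whenever $\zeta\neq1$. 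In Lemma~\ref{lemazerorep} only $\pi(h)^{|G_k/G_m|}=I$ was available, yielding the weaker exponent $|G_k/G_m|^{-1}=\ell^{(k-m)d}$; this is precisely how the dimension $d$, visible there through $\langle\cdot\rangle_\mathscr{G}=|G/G_\bullet|$, is absorbed into the homogeneous norm and disappears from $\langle\cdot\rangle_G$.

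With this bound the induction is formally identical to the one in Lemma~\ref{lemazerorep}. The center $\mathcal{Z}(G)=\exp(\mathfrak{z})$, with $\mathfrak{z}=\ker(\operatorname{ad})$ a direct summand of $\mathfrak{g}$, and the quotient $H:=G/\mathcal{Z}(G)=\exp(\mathfrak{g}/\mathfrak{z})$ (nilpotency class $\mathscr{n}-1$, subgroups $H_n=\exp(\ell^n(\mathfrak{g}/\mathfrak{z}))$) are again compact nilpotent $\ell$-adic Lie groups. In the base case $\mathscr{n}=2$ one writes each irreducible as $\chi\otimes\pi_\lambda$, with $\chi$ factoring through the abelianization and $\pi_\lambda$ induced from a central character $\chi_\lambda$; a point $h_\mathscr{n}=\exp(\ell^k Z_0)$ with $Z_0\in\mathfrak{z}$ a unit vector gives $\pi(h_\mathscr{n})-I_{d_\pi}=(\chi_\lambda(h_\mathscr{n})-1)I_{d_\pi}$, so the key bound for the one-dimensional $\chi_\lambda$ handles all $[\pi]$ non-trivial on $\mathcal{Z}(G)$, while those trivial on $\mathcal{Z}(G)$ factor through $H$ and are covered by a point $h_1=\exp(\ell^k X_1)$. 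The inductive step is the same: the central point $h_\mathscr{n}$ disposes of the representations non-trivial on $\mathcal{Z}(G)$, and the remaining ones descend to $H$, where the induction hypothesis produces $\overline{h}_1,\dots,\overline{h}_{\mathscr{n}-1}$ of norm $\ell^{-k}$ with the desired lower bound; lifting them to $G$ is free since $G\to H$ is an isometry for the homogeneous $\ell$-adic norms and $\langle\xi\otimes1\rangle_H=\langle\xi\otimes1\rangle_G$, so no factor $|\mathcal{Z}(G/G_n)|$ appears, in contrast with Lemma~\ref{lemazerorep}.

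The main obstacle is the one flagged around Condition~(A): $\pi(h)$ can have $1$ as an eigenvalue, so no single $h$ works for all $[\pi]$ at once, and the $\mathscr{n}$-point scheme is the remedy — at each level of the central series a representation is either scalar on the current center, detected without loss by a central point of the right valuation, or else it descends to strictly smaller nilpotency class. Carrying this out uniformly in $[\pi]$ requires choosing each $h_i$ adapted to its level of the series, and checking that passing to $\exp(\mathfrak{g}/\mathfrak{z})$ preserves both the homogeneous $\ell$-adic norm (so $\|h_i\|_\ell=\ell^{-k}$ survives the lift) and the bracket $\langle\cdot\rangle_G$; once those points are settled, the Hausdorff--Young/Plancherel bookkeeping is verbatim as in the proof of Theorem~\ref{firsttitchmarshtheorem}.
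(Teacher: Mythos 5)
Your proposal matches the paper's approach: the paper states this lemma without writing a separate proof, deferring to the induction on nilpotency class from Lemma \ref{lemazerorep}, and your adaptation --- in particular the sharper bound $\pi(h)^{\ell^{m-k}}=I_{d_\pi}$ for $h=\exp(\ell^k X)$ with $X$ a unit vector, which is exactly the mechanism already used in the Heisenberg computation of Remark \ref{remheisenberggroup} and is what lets $\langle\cdot\rangle_G=\ell^m$ replace $\langle\cdot\rangle_{\mathscr{G}}=\ell^{md}$ --- is the intended argument. You also correctly restore the hypothesis that $[\pi]$ be non-trivial on $G_k$, without which the stated inequality is vacuous on the right and false on the left; this restriction is present in Lemma \ref{lemazerorep} and is evidently intended here as well.
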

\begin{pro}\label{propauxLiegroups}
Let $G$ be a $d$-dimensional compact nilpotent $\ell$-adic linear Lie group and let $\sigma :\widehat{G}\to \bigcup_{[\xi] \in \widehat{G}} \C^{d_\xi \times d_\xi}$ be such that $\sigma(\xi) \in \C^{d_\xi \times d_\xi}$ for every $[\xi] \in \widehat{G}$. Let $1 \leq r < \infty$  and $ \gamma > 0$ be positive real numbers. Then $$\langle \xi \rangle_G^{\gamma} \sigma (\xi) \in L^{r} (\widehat{G}) \implies \sigma (\xi) \in L^\beta (\widehat{G}), $$ for all $ \frac{r d}{d +\gamma r} < \beta < \infty$.
\end{pro}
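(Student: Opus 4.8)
The plan is to transcribe the proof of Proposition~\ref{propaux} almost verbatim, changing only the two structural inputs that depend on the group. For a $d$-dimensional compact nilpotent $\ell$-adic Lie group equipped with the natural filtration $G_n = \exp(\ell^n\mathfrak{g})$, the discussion in Section~2 gives $|G_n/G_{n+1}| = \ell^d$, hence $|G/G_n| = \ell^{nd}$, while $\langle\xi\rangle_G = \ell^n$ for $[\xi]\in\widehat{G}_n$; in particular $|G/G_n| = \langle\xi\rangle_G^{\,d}$ on $\widehat{G}_n$. These replace the identity $|G/G_n| = \langle\xi\rangle_\mathscr{G}$ that holds on $\widehat{G}_n$ in the general Vilenkin setting and was used in the proof of Proposition~\ref{propaux}; the mismatch between $\langle\xi\rangle_G=\ell^n$ and $|G/G_n|=\ell^{nd}$ is the sole source of the modified exponent $\tfrac{rd}{d+\gamma r}$.

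First I would dispatch the easy range $\beta\geq r$: since $\langle\xi\rangle_G\geq 1$, the hypothesis already gives $\sigma\in L^r(\widehat{G})$, and since $d_\xi\geq 1$ one has the elementary inclusion $L^r(\widehat{G})\hookrightarrow L^\beta(\widehat{G})$ for $r\leq\beta$ (write $d_\xi^{\,2-\beta/2}\|\sigma(\xi)\|_{HS}^{\beta}\leq\bigl(d_\xi^{\,2-r/2}\|\sigma(\xi)\|_{HS}^{r}\bigr)^{\beta/r}$, valid because $2(1-\beta/r)\leq 0$ and $d_\xi\geq 1$, then sum using $\sum_i a_i^{\,p}\leq(\sum_i a_i)^{p}$ for $p\geq 1$, $a_i\geq 0$). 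So from here on assume $\tfrac{rd}{d+\gamma r}<\beta<r$.

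Then I would reproduce the H{\"o}lder step of Proposition~\ref{propaux}. Expanding $\|\sigma\|_{L^\beta(\widehat{G})}^{\beta}=\sum_{n}\sum_{[\xi]\in\widehat{G}_n}d_\xi^2\,\langle\xi\rangle_G^{-\beta\gamma}\bigl(\langle\xi\rangle_G^{\gamma}\|\sigma(\xi)\|_{HS}/\sqrt{d_\xi}\bigr)^{\beta}$, splitting each summand as $\bigl[d_\xi^{\,2\beta/r}(\langle\xi\rangle_G^{\gamma}\|\sigma(\xi)\|_{HS}/\sqrt{d_\xi})^{\beta}\bigr]\cdot\bigl[d_\xi^{\,2(1-\beta/r)}\langle\xi\rangle_G^{-\beta\gamma}\bigr]$, and applying H{\"o}lder with exponents $r/\beta$ and $r/(r-\beta)$, the first factor becomes $\|\langle\xi\rangle_G^{\gamma}\sigma\|_{L^r(\widehat{G})}^{\beta}<\infty$ and the second becomes the $(r-\beta)/r$-th power of $\sum_{n}\sum_{[\xi]\in\widehat{G}_n}d_\xi^2\langle\xi\rangle_G^{-\beta\gamma r/(r-\beta)}=\sum_{n}\ell^{-n\beta\gamma r/(r-\beta)}\sum_{[\xi]\in\widehat{G}_n}d_\xi^2\leq\sum_{n}\ell^{\,n(d-\beta\gamma r/(r-\beta))}$, where I used $\langle\xi\rangle_G=\ell^n$ on $\widehat{G}_n$ and the Peter--Weyl count $\sum_{[\xi]\in\widehat{G}_n}d_\xi^2\leq\sum_{k\leq n}\sum_{[\xi]\in\widehat{G}_k}d_\xi^2=|G/G_n|=\ell^{nd}$. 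This geometric series converges exactly when $d-\beta\gamma r/(r-\beta)<0$, i.e.\ when $\beta>\tfrac{rd}{d+\gamma r}$, which is the standing assumption; this completes the proof.

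I do not anticipate a genuine obstacle: the argument is a mechanical adaptation of Proposition~\ref{propaux}, and the only place the dimension $d$ intervenes is in combining $\sum_{[\xi]\in\widehat{G}_n}d_\xi^2\leq\ell^{nd}$ with $\langle\xi\rangle_G=\ell^n$, which together shift the admissible range from $\beta>\tfrac{r}{1+\gamma r}$ to $\beta>\tfrac{rd}{d+\gamma r}$. The only mildly delicate point is the inclusion $L^r(\widehat{G})\hookrightarrow L^\beta(\widehat{G})$ in the easy range, handled above.
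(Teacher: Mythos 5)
Your proposal is correct and follows essentially the same route as the paper: the paper's proof of Proposition~\ref{propauxLiegroups} simply repeats the H\"older argument of Proposition~\ref{propaux} (including the easy reduction for $\beta\geq r$) with $\langle\xi\rangle_G=\ell^n$ and $\sum_{[\xi]\in\widehat{G}_n}d_\xi^2=(1-\ell^{-d})\ell^{nd}$ in place of the general Vilenkin quantities, arriving at the same geometric series and the same convergence threshold $\beta>\tfrac{rd}{d+\gamma r}$. Your added justification of the embedding $L^r(\widehat{G})\hookrightarrow L^\beta(\widehat{G})$ is a correct filling-in of a step the paper states without detail.
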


\begin{proof}
Following the same arguments as in Proposition \ref{propaux} we get: 

\begin{align*}
    \| \sigma \|^\beta_{L^\beta (\widehat{G})} & \leq \Big( \sum_{n \in \N_0} \sum_{[\xi] \in \widehat{G}_n} d_\xi^2 \Big( \frac{\langle \xi \rangle_G^\gamma ||\widehat{\sigma} (\xi)||_{HS}}{\sqrt{d_\xi}} \Big)^{r} \Big)^{\beta/r} \times \Big( \sum_{n \in \N_0} \sum_{[\xi] \in \widehat{G}_n} d_\xi^2 \langle \xi \rangle_G^{-\beta \gamma \frac{r}{r - \beta}}  \Big)^{1 - \beta/r}, 
\end{align*}where we can calculate explicitly 
$$\sum_{n \in \N_0} \sum_{[\xi] \in \widehat{G}_n} d_\xi^2 \langle \xi \rangle_G^{-\beta \gamma \frac{r}{r - \beta}} = \sum_{n \in \N_0} \ell^{-n \beta \gamma \frac{r}{r - \beta}} \sum_{[\xi] \in \widehat{G}_n}d_\xi^2 = (1-\ell^{-d})\sum_{n \in \N_0} \ell^{n\big( d -\beta \gamma \frac{r}{r - \beta}\big)}. $$Clearly, the above series is convergent only when $ \frac{r d}{d +\gamma r} < \beta < \infty$.
\end{proof}

With the above, we can prove the first Titchmarsh theorem on compact $d$-dimensional nilpotent $\ell$-adic Lie groups. The proof of the second Titchmarsh theorem is exactly the same as for Theorem \ref{seconteo}, so we left to the reader the details. 

\begin{proof}[Proof of Theorem \ref{firsttitchmarshtheoremLiegroups}:]
Clearly $\xi (h) - I_{d_\xi} = 0$ if $\langle \xi \rangle_G \leq \|h\|_\ell^{-1}$. By the Hausdorff-Young inequality we have $$\sum_{\langle \xi \rangle_G > \|h\|_\ell^{-1}} d^{q (\frac{2}{q} - \frac{1}{2})} \| (\xi(h) - I_{d_\xi} ) \widehat{f} (\xi)\|_{HS}^{q} =  ||(\xi(h) - I_{d_\xi})\widehat{f}||_{L^{q} (\widehat{G})}^{q} \leq || f(h \cdot) - f(\cdot)||_{L^r (G)}^{q} = \mathcal{O} (\|h\|_\ell^{\alpha q}) .$$

Let $n \in \N_0$ be a natural number and let $\mathscr{n}$ and $h_1,..,h_{\mathscr{n}}$ be like in Lemma \ref{lemazerorepLiegroups}. Then \begin{align*}
    \sum_{\langle \xi \rangle_G > \ell^k}  d^{q (\frac{2}{q} - \frac{1}{2})} \langle \xi \rangle_{G}^{-q} \| \widehat{f}(\xi) \|_{HS}^q \lesssim \sum_{\langle \xi \rangle_G  > \ell^k } \sum_{i=1}^{\mathscr{n}} d^{q (\frac{2}{q} - \frac{1}{2})} \|  \widehat{f} (\xi)(\xi(h_i) - I_{d_\xi} )\|_{HS}^{q} = \mathcal{O}(\ell^{- k (\alpha+1) q}),
\end{align*}
for every $k \in \N_0$. Now with this information, taking $\gamma > \alpha$, we can apply once again Duren's lemma with $b = (\gamma - \alpha)q = (\gamma + 1)q - (\alpha + 1)q$ and $a= \gamma +1 $ to conclude that $$\sum_{\langle \xi \rangle_G \leq \ell^k}  d^{q (\frac{2}{q} - \frac{1}{2})} \langle \xi \rangle_G^{\gamma q} \| \widehat{f}(\xi) \|_{HS}^q = \mathcal{O} (\ell^{-(\gamma - \alpha)q}).$$W can apply Duren's lemma again in the reverse direction to obtain the first conclusion in Theorem \ref{firsttitchmarshtheoremLiegroups}: $$\sum_{\langle \xi \rangle_G > \ell^k}  d^{q (\frac{2}{q} - \frac{1}{2})}  \| \widehat{f}(\xi) \|_{HS}^q = \mathcal{O}(\ell^{-k\alpha q}).$$
Now, for $\beta \leq q$ using  H{\"o}lder inequality we get $$\sum_{\langle \xi \rangle_G \leq  \ell^k} d_\xi^{\beta (\frac{2}{\beta} - \frac{1}{2})} \langle \xi \rangle_G^{\gamma \beta } || \widehat{f} (\xi)||_{HS}^{\beta} \leq \Big( \sum_{ \langle \xi \rangle_G \leq \ell^k} d_\xi^2 \Big( \frac{\langle \xi \rangle^{\gamma} || \widehat{f} (\xi)||_{HS}}{ \sqrt{d_\xi}}  \Big)^{q} \Big)^{\beta / q} \Big(  \sum_{\langle \xi \rangle_G \leq \ell^k} d_\xi^2 \Big)^{1 - \beta/ q}.$$Finally we use the fact that $$\ell^{kd} =  \sum_{\langle \xi \rangle_G \leq \ell^k} d_\xi^2 ,$$to obtain  $$\sum_{\langle \xi \rangle_G \leq  \ell^k} d_\xi^{\beta (\frac{2}{\beta} - \frac{1}{2})} \langle \xi \rangle_G^{\gamma \beta } || \widehat{f} (\xi)||_{HS}^{\beta} = \mathcal{O} (\ell^{k(\gamma - \alpha)\beta}) \mathcal{O} (\ell^{kd(1 - \beta / q)}) = \mathcal{O} (\ell^{k(\gamma - \alpha)\beta + kd(1 - \beta/q)}).$$ The above quantity is bounded when $$\beta \geq \frac{q d}{(\alpha - \gamma)q + d}= \frac{d}{(\alpha - \gamma) +d - \frac{d}{p}}.$$To conclude the proof just notice that $$\frac{d \frac{qd}{d + (\alpha - \gamma)q}}{d + \frac{\gamma q d}{d + (\alpha - \gamma)q}} = \frac{q d}{ \alpha q +d} = \frac{dp}{\alpha p + dp - d},$$and use Proposition \ref{propauxLiegroups}
\end{proof}
\begin{rem}
    Notice how our results differ from those in \cite{DaherDelgadoRuzhansky}. Here a parameter $\gamma$ appears in the theorem, in contrast with \cite[Theorem 3.2]{DaherDelgadoRuzhansky}. The reason is that the authors in \cite{DaherDelgadoRuzhansky} overlooked the fact that, in order to obtain the condition $$\beta \geq \frac{q d}{(\alpha - 1)q + d},$$it must hold that $1< \alpha + d/q$. This means that \cite[Theorem 3.2]{DaherDelgadoRuzhansky} does not hold for arbitrary $\alpha , q$ and $d$, because one of this parameter is going to be dependent on the others. Also, it is desirable to have a result telling how much Sobolev regularity do Lipchitz functions of order $\alpha$ have, which is not cleat from the arguments in \cite{DaherDelgadoRuzhansky}. Considering that, we introduce the parameter $\gamma$ in Theorem \ref{firsttitchmarshtheoremLiegroups}. In this way $\alpha , q$ and $d$  can be choosen arbitrarily, and at the same time, the maximun value that this parameter can take tell us how much Sobolev regularity does $f \in Lip_G (\alpha ; 2)$ has. This is an improvement over \cite{DaherDelgadoRuzhansky}.
    
\end{rem}

\section{Modulus of continuity}

\begin{proof}[Proof of Theorem \ref{ProPlatonov1}:]
    In one hand, let us take $h \in G_n / G_{n+1}$. Then \begin{align*}
        \| f(h \cdot) - f(\cdot) \|_{L^2 (\mu_G)}^2 &= \sum_{\langle \xi \rangle_{\mathscr{G}} > |G/G_n|} d_\xi \| \widehat{f}(\xi)(\xi(h) - I_{d_\xi}) \|_{HS}^2 \\ & \leq 4 \sum_{\langle \xi \rangle_{\mathscr{G}} > |G/G_n|} d_\xi \| \widehat{f}(\xi) \|_{HS}^2,
    \end{align*}
so that $$\omega_2 (f , \mathscr{G}, n):= \sup_{h \in G_n} \| f(h \cdot) - f(\cdot) \|_{L^2 (\mu_G)} \leq 2  \Big( \sum_{\langle \xi \rangle_{\mathscr{G}} > |G/G_n|} d_\xi \| \widehat{f}(\xi) \|_{HS}^2 \Big)^{1/2}.$$
Conversely, if we integrate $\| f(h \cdot) - f(\cdot) \|_{L^2 (\mu_G)}^2$ with respect to $h$ we get $$\int_{G_n} \| f(h \cdot) - f(\cdot) \|_{L^2 (\mu_G)}^2 d \mu_G(h) = \sum_{\langle \xi \rangle_{\mathscr{G}} > |G/G_n|} d_\xi \int_{G_n} \| \widehat{f}(\xi) (\xi(h) - I_{d_\xi})   \|_{HS}^2 d \mu_G(h) .$$For every $[\xi] \in \widehat{G}$ and every $h \in G_n$ we can choose a basis $v_1,...,v_{d_\xi}$ of the representation space $\mathcal{H}_\xi$ where  $\xi(h) - I_{d_\xi}$ is a diagonal matrix. Since $\xi(h)$ is unitary, the elements of the diagonal have to be of the form $\zeta_j (h) - 1$, $1 \leq j \leq d_\xi$ where $|\zeta (h) |=1,$ and from Proposition \ref{proIntegralRep} we can deduce $$ \int_{G_n} \zeta_j (h)   d \mu_G(h) =0, \esp \esp 1 \leq j \leq d_\xi.$$In this way we obtain \begin{align*}
    \int_{G_n} \| \widehat{f}(\xi) (\xi(h) - I_{d_\xi})   \|_{HS}^2 d \mu_G(h) & = \sum_{j =1}^{d_\xi} \big( \int_{G_n} |\zeta_j (h) - 1|^2 d \mu_G(h) \big) \| \widehat{f}(\xi) v_j \|_{\mathcal{H}_\xi}^2 \\ & =\sum_{j =1}^{d_\xi} \big( \int_{G_n} 2(1 - \mathfrak{Re}[\zeta_j (h)]) d \mu_G(h) \big) \| \widehat{f}(\xi) v_j \|_{\mathcal{H}_\xi}^2 \\ &= 2|G_n|\sum_{j =1}^{d_\xi} \| \widehat{f}(\xi) v_j \|_{\mathcal{H}_\xi}^2 =  2|G_n| \| \widehat{f}(\xi) \|_{HS}^2.
\end{align*}
In conclusion, we proved: \begin{align*}
    2 |G_n| \sum_{\langle \xi \rangle_{\mathscr{G}} > |G/G_n|} d_\xi \| \widehat{f}(\xi) \|_{HS}^2 &= \int_{G_n} \| f(h \cdot) - f(\cdot) \|_{L^2 (\mu_G)}^2 d \mu_G(h) \\ & \leq \int_{G_n} \sup_{h \in G_n} \| f(h \cdot) - f(\cdot) \|_{L^2 (\mu_G)}^2 d \mu_G(h) = |G_n| \omega_2 (f , \mathscr{G}, n)^2,
\end{align*}
so we arrive at the conclusion $$ \Big(  \sum_{\langle \xi \rangle_{\mathscr{G}} > |G/G_n|} d_\xi \| \widehat{f}(\xi) \|_{HS}^2 \Big)^{1/2} \leq \frac{1}{\sqrt{2}} \omega_2 (f , \mathscr{G}, n).$$Finally, to see that the constant $1/\sqrt{2}$ is sharp, consider the characteristic functions $\1_{G_m}$. For this function it holds:
\[
\int_{G} | \1_{G_m} (hx) - \1_{G_m} (x) |^2 d \mu_G(x)= \begin{cases}
 0  \esp & \esp \text{if} \esp \esp h \in G_m, \\
2|G_m| \esp & \esp \text{if} \esp \esp h \notin G_m,  
\end{cases}
\]and also \[
\widehat{\1}_{G_m} (\xi) = \begin{cases}
 0_{d_\xi}  \esp & \esp \text{if} \esp \esp \langle \xi \rangle_{\mathscr{G}} > |G/G_m|, \\
|G_m| I_{d_\xi} \esp & \esp \text{if} \esp \esp \langle \xi \rangle_{\mathscr{G}} \leq |G/G_m|.  
\end{cases}
\]
So, in one direction we obtain: 
\[
\omega_2 (\1_{G_m}, \mathscr{G}, n ) := \sup_{h \in G_n} \| \1_{G_m} (h\cdot) - \1_{G_m} (\cdot) \|_{L^2 (\mu_G)} = \begin{cases}
 0  \esp & \esp \text{if} \esp \esp n \geq m, \\
\sqrt{2|G_m|} \esp & \esp \text{if} \esp \esp n<m,  
\end{cases}
\]
and in the other: \[
 \sum_{ \langle \xi \rangle_{\mathscr{G}} > |G/G_n|} d_\xi \| \widehat{\1}_{G_m} (\xi)\|^2_{HS} = \begin{cases}
 0  \esp & \esp \text{if} \esp \esp n \geq m, \\
|G_m| \big(1 - \frac{|G/G_n|}{|G/G_m|} \big) \esp & \esp \text{if} \esp \esp n<m.  
\end{cases}
\]
Summing up this means that: 
$$\Big( \sum_{ \langle \xi \rangle_{\mathscr{G}} > |G/G_n|} d_\xi \| \widehat{\1}_{G_m} (\xi)\|^2_{HS} \Big)^{1/2} = \frac{1}{\sqrt{2}} \omega_2 (\1_{G_m}, \mathscr{G}, n ) \big(1 - \frac{|G/G_n|}{|G/G_m|} \big).$$By fixing $n$ and letting $m \to \infty$ we arrive to the conclusion that the constant $1/\sqrt{2}$ is sharp. 
\end{proof}

\section{Dini–Lipschitz functions}

In this section, we sketch the proofs of Theorems \ref{toeDL1} and \ref{teoDL2} on compact Vilenkin groups satisfying Condition (A). For the proof of Theorem \ref{toeDL1} and Theorem \ref{teoDL2} we need the following modified version of Duren's lemma.  See \cite[Lemma 4.1]{DaherDelgadoRuzhansky} for the proof. 

\begin{lema}\label{lemmaDurenModified}
 Suppose $\nu \in \R$, $c_k \geq 0$ and $0 < b <a.$ Then 
$$\sum_{k = 1}^{N} k^a c_k = \mathcal{O} ( N^b (\log{N})^{\nu} ),$$
if and only if $$\sum_{k = N}^{\infty} c_k = \mathcal{O}( N^{b-a}( \log{N})^\nu  ).$$ 
\end{lema}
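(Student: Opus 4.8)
The plan is to prove Lemma~\ref{lemmaDurenModified}, the logarithmically-weighted version of Duren's Tauberian lemma, by adapting the classical Abel-summation argument, treating the slowly varying factor $(\log N)^\nu$ essentially as a constant over dyadic blocks. Throughout, write $S_N := \sum_{k=1}^N k^a c_k$ and $T_N := \sum_{k=N}^\infty c_k$, so the two hypotheses read $S_N = \mathcal{O}(N^b(\log N)^\nu)$ and $T_N = \mathcal{O}(N^{b-a}(\log N)^\nu)$; we must show they are equivalent (with the usual convention $(\log N)^\nu$ replaced by, say, $(\log(N+2))^\nu$ to avoid trouble at small $N$, or simply restricting to $N \geq 2$).

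\textbf{Direction ($\Rightarrow$).} Assume $S_N = \mathcal{O}(N^b(\log N)^\nu)$. Since $c_k \geq 0$, for $M > N$ summation by parts gives
\[
\sum_{k=N}^{M} c_k = \sum_{k=N}^{M} k^{-a}\,(k^a c_k) = \sum_{k=N}^{M-1} (S_k - S_{N-1})\big(k^{-a} - (k+1)^{-a}\big) + (S_M - S_{N-1})M^{-a},
\]
and estimating $k^{-a} - (k+1)^{-a} \lesssim k^{-a-1}$ together with $S_k \lesssim k^b(\log k)^\nu$, $b < a$, we get $T_N \lesssim N^{b-a}(\log N)^\nu$ after summing the convergent tail $\sum_{k \geq N} k^{b-a-1}(\log k)^\nu$; here one uses that for $b-a<0$ the sum $\sum_{k\geq N} k^{b-a-1}(\log k)^\nu$ is comparable to $N^{b-a}(\log N)^\nu$, which follows from comparison with the integral $\int_N^\infty x^{b-a-1}(\log x)^\nu\,dx$ and integration by parts.

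\textbf{Direction ($\Leftarrow$).} Assume $T_N = \mathcal{O}(N^{b-a}(\log N)^\nu)$. Again by Abel summation, for $N \geq 1$,
\[
S_N = \sum_{k=1}^N k^a c_k = \sum_{k=1}^{N-1} T_k\big(k^a - (k-1)^a\big) - T_N N^a + (\text{boundary terms}),
\]
more precisely $\sum_{k=1}^N k^a c_k = \sum_{k=1}^N k^a (T_k - T_{k+1}) = \sum_{k=2}^N (k^a - (k-1)^a) T_k + T_1 - N^a T_{N+1}$; using $k^a - (k-1)^a \lesssim k^{a-1}$ and $T_k \lesssim k^{b-a}(\log k)^\nu$ yields $S_N \lesssim \sum_{k=1}^N k^{b-1}(\log k)^\nu \lesssim N^b(\log N)^\nu$, where the last comparison again uses $b>0$ and the integral test. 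The positivity $c_k \geq 0$ is what makes the partial sums monotone and the boundary terms have the right sign, so no cancellation issues arise.

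The only genuinely delicate point — and the one I would treat carefully rather than wave through — is controlling the sums $\sum_{k=N}^\infty k^{b-a-1}(\log k)^\nu$ and $\sum_{k=1}^N k^{b-1}(\log k)^\nu$ uniformly in $\nu \in \R$ (in particular for negative $\nu$, where $(\log k)^\nu$ is decreasing, and for the lower range of $k$ where $\log k$ is small); this is handled by splitting off finitely many initial terms and comparing the rest to $\int x^{b-a-1}(\log x)^\nu dx$ resp.\ $\int x^{b-1}(\log x)^\nu dx$, both of which are $\asymp N^{b-a}(\log N)^\nu$ resp.\ $\asymp N^b(\log N)^\nu$ since the exponent of the power is nonzero and dominates the logarithmic factor asymptotically. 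Since the statement already directs the reader to \cite[Lemma~4.1]{DaherDelgadoRuzhansky} for the full proof, in the paper I would simply record the summation-by-parts reduction above and cite that reference for the elementary estimate, rather than reproducing it in detail.
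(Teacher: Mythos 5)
Your proof is correct: both directions of the Abel-summation argument go through, and you rightly isolate the only delicate point, namely the comparison of $\sum_{k\geq N} k^{b-a-1}(\log k)^\nu$ and $\sum_{k\leq N} k^{b-1}(\log k)^\nu$ with the corresponding integrals (using $b-a<0$ and $b>0$ respectively, and discarding finitely many small-$k$ terms when $\nu<0$). The paper gives no proof of this lemma at all --- it defers entirely to \cite[Lemma 4.1]{DaherDelgadoRuzhansky} --- and your summation-by-parts argument is the standard one carried out there, so there is nothing to reconcile.
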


The proof of Theorem \ref{toeDL1} is almost the same as in Theorem \ref{firsttitchmarshtheorem} so here we only include the necessary modifications. 

\begin{proof}[Proof of Theorem \ref{toeDL1}:]
Let $f \in DL_\mathscr{G} (\alpha, \nu ; p)$. Then $$\sum_{\langle \xi \rangle_\mathscr{G} > |h|_\mathscr{G}^{-1}} d^{q (\frac{2}{q} - \frac{1}{2})} \|  \widehat{f} (\xi)(\xi(h) - I_{d_\xi} )\|_{HS}^{q} = \mathcal{O} (|h|_\mathscr{G}^{\alpha q} \big( \log{\frac{1}{|h|_{\mathscr{G}}}} \big)^{\nu q}) .$$
Let $n \in \N_0$ be a natural number and let $\mathscr{n}$ and $h_1,..,h_{\mathscr{n}}$ be like in Condition (A). Then \begin{align*}
    \sum_{\langle \xi \rangle_\mathscr{G} > |G/G_n|}  d^{q (\frac{2}{q} - \frac{1}{2})} \langle \xi \rangle_{\mathscr{G}}^{-q} \| \widehat{f}(\xi) \|_{HS}^q &\lesssim \sum_{\langle \xi \rangle_\mathscr{G} > |G/G_n|} \sum_{i=1}^{\mathscr{n}} d^{q (\frac{2}{q} - \frac{1}{2})} \|  \widehat{f} (\xi)(\xi(h_i) - I_{d_\xi} )\|_{HS}^{q} \\ & = \mathcal{O}(|G/G_n|^{- (\alpha+1) q} (\log{|G/G_n|})^{\nu q}),
\end{align*}
for every $n \in \N_0$. Now we apply Lemma \ref{lemmaDurenModified} with $b = (\gamma - \alpha)q = (\gamma + 1)q - (\alpha + 1)q$ and $a= \gamma +1 $ to conclude that $$\sum_{\langle \xi \rangle_\mathscr{G} \leq |G/G_n|}  d^{q (\frac{2}{q} - \frac{1}{2})} \langle \xi \rangle_{\mathscr{G}}^{\gamma q} \| \widehat{f}(\xi) \|_{HS}^q = \mathcal{O} (|G/G_n|^{-(\gamma - \alpha)q} (\log{|G/G_n|})^{\nu q}).$$And again, we use the lemma in the reverse direction to obtain: $$\sum_{\langle \xi \rangle_\mathscr{G} > |G/G_n|}  d^{q (\frac{2}{q} - \frac{1}{2})}  \| \widehat{f}(\xi) \|_{HS}^q = \mathcal{O}(|G/G_n|^{-\alpha q}(\log{|G/G_n|})^{ \nu q}).$$Following the same steps as in the proof of Theorem \ref{firsttitchmarshtheorem}, we arrive to $$\sum_{\langle \xi \rangle \leq |G / G_n|} d_\xi^{\beta (\frac{2}{\beta} - \frac{1}{2})} \langle \xi \rangle_{\mathscr{G}}^{\beta \gamma} \| \widehat{f}(\xi) \|_{HS}^{\beta} = \mathcal{O} ( |G/G_n|^{(\gamma - \alpha)\beta + (1 - \frac{\beta}{q})} (\log{|G/G_n|})^{\nu \beta} ), \esp \esp \text{as} \esp n \to \infty. $$ To conclude the proof just notice that the above quantity is bounded provided either $$ \esp  \frac{q}{(\alpha - \gamma)q + 1} = \frac{p}{(\alpha - \gamma)p + p - 1} \leq \beta \leq q , \esp \esp \text{and} \esp \esp \nu = 0,$$ or$$ \esp  \frac{q}{(\alpha - \gamma)q + 1} = \frac{p}{(\alpha - \gamma)p + p - 1} < \beta \leq q , \esp \esp \text{and} \esp \esp \nu \in \R.$$This fact together with Proposition \ref{propaux} concludes the proof. 
\end{proof}

\begin{proof}[Proof of Theorem \ref{teoDL2}:]
In one hand, from the proof of Theorem \ref{toeDL1} we know that the condition $$\| f(\cdot h) - f(\cdot) \|_{L^2(\mu_G)} = \mathcal{O} \Big( |h|_\mathscr{G}^\alpha \big(\log{\frac{1}{|h|_\mathscr{G}}}\big)^{\nu} \Big),$$implies that:
$$\sum_{\langle \xi \rangle_\mathscr{G} > |G/G_n|}  d^{q (\frac{2}{q} - \frac{1}{2})}  \| \widehat{f}(\xi) \|_{HS}^q = \mathcal{O}(|G/G_n|^{- 2 \alpha }(\log{|G/G_n|})^{ 2\nu }).$$On the other hand, by Theorem \ref{ProPlatonov1}, the above condition implies that $$\| f(\cdot h) - f(\cdot) \|_{L^2(\mu_G)} = \mathcal{O} \Big( |h|_\mathscr{G}^\alpha \big(\log{\frac{1}{|h|_\mathscr{G}}}\big)^{\nu} \Big).$$This concludes the proof.

\end{proof}

\nocite{*}
\bibliographystyle{acm}
\bibliography{main}
\Addresses

\end{document}